\newtheorem{theorem}{Theorem}
\theoremstyle{plain}
\newtheorem{corollary}{Corollary}
\newtheorem{lemma}{Lemma}
\newtheorem{proposition}{Proposition}
\numberwithin{equation}{section}
\begin{document}
\title{NEW INEQUALITIES FOR $n-$ TIME DIFFERNTIABLE FUNCTIONS }
\author{M. EM\.{I}N \"{O}ZDEM\.{I}R$^{\bigstar }$}
\address{$^{\bigstar }$ATAT\"{U}RK UNIVERSITY, K. K. EDUCATION FACULTY,
DEPARTMENT OF MATHEMATICS, 25240, CAMPUS, ERZURUM, TURKEY}
\email{emos@atauni.edu.tr}
\author{\c{C}ET\.{I}N YILDIZ$^{\bigstar ,\spadesuit }$}
\email{cetin@atauni.edu.tr}
\thanks{$^{\spadesuit }$Corresponding Author.}
\subjclass[2000]{ 26D15, 26D10.}
\keywords{Hermite-Hadamard Inequality, Convex Functions.}

\begin{abstract}
In this paper, we obtain several inequalities of Ostrowski type that the
absolute values of n-time differntiable functions are convex.
\end{abstract}

\maketitle

\section{INTRODUCTION}

In 1938 Ostrowski \cite{ostr} obtained a bound for the absolute value of the
difference of a function to its average over a finite interval. The theorem
is as follows.

\begin{theorem}
Let $f:[a,b]\rightarrow 
\mathbb{R}
$ be a differentiable mapping on $[a,b]$ and let $\left\vert f^{\prime
}(t)\right\vert \leq M$ for all $t\in (a,b)$, then the following bound is
valid%
\begin{equation}
\left\vert f(x)-\frac{1}{b-a}\int_{a}^{b}f(t)dt\right\vert \leq (b-a)M\left[ 
\frac{1}{4}+\frac{\left( x-\frac{a+b}{2}\right) ^{2}}{(b-a)^{2}}\right]
\label{1}
\end{equation}%
for all $x\in \lbrack a,b].$ The constant $\frac{1}{4}$ is sharp in the
sence that it can not be replaced by a smaller one.
\end{theorem}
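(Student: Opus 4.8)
The plan is to write the quantity $f(x)-\frac{1}{b-a}\int_a^b f(t)\,dt$ as a single integral against $f'$, so that the hypothesis $|f'|\le M$ can be inserted directly. To this end I would introduce the Peano kernel
\[
p(x,t)=\begin{cases} t-a, & a\le t\le x,\\ t-b, & x<t\le b,\end{cases}
\]
and establish the Montgomery identity
\[
(b-a)f(x)-\int_a^b f(t)\,dt=\int_a^b p(x,t)\,f'(t)\,dt .
\]
This is proved by splitting the right-hand integral at $t=x$ and integrating by parts on each piece: on $[a,x]$ one gets $(x-a)f(x)-\int_a^x f$, and on $[x,b]$ one gets $(b-x)f(x)-\int_x^b f$, and adding the two telescopes to the left-hand side. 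The differentiability of $f$ on all of $[a,b]$ is exactly what legitimizes these integrations by parts.

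Next I would divide the identity by $b-a$, take absolute values, and use the triangle inequality for integrals together with $|f'(t)|\le M$ to obtain
\[
\left\vert f(x)-\frac{1}{b-a}\int_a^b f(t)\,dt\right\vert \le \frac{M}{b-a}\int_a^b |p(x,t)|\,dt .
\]
It then remains to evaluate the kernel integral. Since $p(x,t)=t-a\ge 0$ on $[a,x]$ and $p(x,t)=t-b\le 0$ on $[x,b]$, the absolute value splits cleanly and I would compute
\[
\int_a^b |p(x,t)|\,dt=\int_a^x (t-a)\,dt+\int_x^b (b-t)\,dt=\frac{(x-a)^2+(b-x)^2}{2}.
\]

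The final algebraic step is to recognize this expression in the form stated in the theorem. Setting $u=x-\frac{a+b}{2}$ and using $x-a=u+\frac{b-a}{2}$, $b-x=\frac{b-a}{2}-u$, the cross terms cancel and one finds $(x-a)^2+(b-x)^2=2u^2+\frac{(b-a)^2}{2}$. Dividing by $2(b-a)$ and multiplying by $M$ yields precisely $(b-a)M\!\left[\frac14+\frac{(x-\frac{a+b}{2})^2}{(b-a)^2}\right]$, which is the claimed right-hand side.

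The one part that requires genuine thought rather than routine computation is the sharpness of $\tfrac14$. I would argue it by a test function: take $f(t)=t$ (so $M=1$) and evaluate at the endpoint $x=a$. There the left-hand side equals $\left|a-\frac{a+b}{2}\right|=\frac{b-a}{2}$, while a bound of the stated shape with $\tfrac14$ replaced by a constant $c$ gives $(b-a)\bigl(c+\tfrac14\bigr)$. Equality forces $c\ge\tfrac14$, so no smaller constant can work, which is the asserted sharpness. The main obstacle is thus not the identity or the integral estimate---both are mechanical---but choosing an admissible (differentiable) extremal example that pins the constant down exactly.
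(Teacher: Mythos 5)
Your proof is correct: the Montgomery identity, the evaluation of $\int_a^b|p(x,t)|\,dt=\tfrac{(x-a)^2+(b-x)^2}{2}$, the completion of the square, and the sharpness argument via $f(t)=t$ at $x=a$ all check out. The paper gives no proof of its own but explicitly points to the Dragomir--Wang integration-by-parts argument with the Peano kernel $p(x,t)$, which is exactly the route you take.
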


For applications of Ostrowski's inequality to some special means and some
numerical quadrature rules, we refer the reader to the recent paper \cite{c}
by S.S. Dragomir and S. Wang who used integration by parts from $%
\int_{a}^{b}p(x,t)f^{\prime }(t)dt$ to prove Ostrowski's inequality (\ref{1}%
) where $p(x,t)$ is a peano kernel given by%
\begin{equation*}
p(x,t)=\left\{ 
\begin{array}{cc}
t-a, & t\in \lbrack a,x] \\ 
t-b, & t\in (x,b].%
\end{array}%
\right.
\end{equation*}

In \cite{d}, also A. Sofo and S.S Dragomir\textit{\ }extended the result (%
\ref{1}) in the $Lp$ norm.

Dragomir (\cite{drag2}-\cite{b}) further extended the result (\ref{1}) to
incorporate mappings of bounded variation, Lipschitzian and monotonic
mappings.

Cerone \textit{et al.} \cite{ceron} as well as Dedi\'{c} \textit{et al.} 
\cite{dedic} and Pearce \textit{et al.} \cite{pearc} further extended the
result (\ref{1}) by considering $n$-times differentiable mappings on an
interior point $x\in \lbrack a,b]$. Furthermore, for recent results and
generalizations concerning Ostrowski's inequality see \cite{al}, \cite{kav}-%
\cite{e2}, \cite{set} and \cite{set2}.

In \cite{ceron}, Cerone, Dragomir and Roumeliotis proved the following
results:

\begin{lemma}
\label{cet}Let $f:[a,b]\rightarrow 
\mathbb{R}
$ be a mapping such that $f^{(n-1)}$ is absolutely continuous on $[a,b].$
Then for all $x\in \lbrack a,b]$ we have the identity:%
\begin{eqnarray*}
\int_{a}^{b}f(t)dt &=&\overset{n-1}{\underset{k=0}{\sum }}\left[ \frac{%
(b-x)^{k+1}+(-1)^{k}(x-a)^{k+1}}{(k+1)!}\right] f^{(k)}(x) \\
&&+(-1)^{n}\int_{a}^{b}K_{n}(x,t)f^{(n)}(t)dt
\end{eqnarray*}%
where the kernel $K_{n}:[a,b]^{2}\rightarrow 
\mathbb{R}
$ is given by%
\begin{equation*}
K_{n}(x,t)=\left\{ 
\begin{array}{cc}
\frac{(t-a)^{n}}{n!} & if\text{ }t\in \lbrack a,x] \\ 
&  \\ 
\frac{(t-b)^{n}}{n!} & if\text{ }t\in (x,b],%
\end{array}%
\right.
\end{equation*}%
$x\in \lbrack a,b]$ and $n$ natural number, $n\geq 1.$
\end{lemma}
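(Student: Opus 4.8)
The plan is to prove the identity by starting from the kernel term on the right-hand side and integrating by parts $n$ times, rather than verifying the stated formula directly. First I would split the kernel integral at the point $x$, writing
\[
\int_{a}^{b}K_{n}(x,t)f^{(n)}(t)\,dt=\int_{a}^{x}\frac{(t-a)^{n}}{n!}f^{(n)}(t)\,dt+\int_{x}^{b}\frac{(t-b)^{n}}{n!}f^{(n)}(t)\,dt=:I_{1}+I_{2}.
\]
The absolute continuity of $f^{(n-1)}$ guarantees that each repeated integration by parts is legitimate and that the integrals are well defined.

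For $I_{1}$ I would integrate by parts with $u=(t-a)^{n}/n!$ and $dv=f^{(n)}(t)\,dt$. Since $\frac{d}{dt}\frac{(t-a)^{m}}{m!}=\frac{(t-a)^{m-1}}{(m-1)!}$ and $(t-a)^{m}$ vanishes at $t=a$, each step contributes a single boundary term at $t=x$ and flips the sign of the remaining integral. Setting $J_{m}=\int_{a}^{x}\frac{(t-a)^{m}}{m!}f^{(m)}(t)\,dt$, this yields the recursion $J_{m}=\frac{(x-a)^{m}}{m!}f^{(m-1)}(x)-J_{m-1}$ with $J_{0}=\int_{a}^{x}f(t)\,dt$, and unrolling gives
\[
I_{1}=J_{n}=\sum_{k=0}^{n-1}(-1)^{n-1-k}\frac{(x-a)^{k+1}}{(k+1)!}f^{(k)}(x)+(-1)^{n}\int_{a}^{x}f(t)\,dt.
\]
The identical computation on $I_{2}$, now with $u=(t-b)^{n}/n!$, produces boundary terms at $t=x$ of the form $(x-b)^{m}=(-1)^{m}(b-x)^{m}$, and after unrolling one finds
\[
I_{2}=(-1)^{n+1}\sum_{k=0}^{n-1}\frac{(b-x)^{k+1}}{(k+1)!}f^{(k)}(x)+(-1)^{n}\int_{x}^{b}f(t)\,dt.
\]

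Finally I would multiply by $(-1)^{n}$, add the two pieces, and use $\int_{a}^{x}f+\int_{x}^{b}f=\int_{a}^{b}f$. The surviving sum collapses to $-\sum_{k=0}^{n-1}\frac{(b-x)^{k+1}+(-1)^{k}(x-a)^{k+1}}{(k+1)!}f^{(k)}(x)$, so that $(-1)^{n}\int_{a}^{b}K_{n}f^{(n)}=\int_{a}^{b}f-\sum_{k=0}^{n-1}(\cdots)f^{(k)}(x)$, which rearranges to the claimed identity. The main obstacle is purely the sign bookkeeping: the alternating signs $(-1)^{n-1-k}$ coming from the integration-by-parts recursion must be tracked together with the factors $(-1)^{m}$ produced by evaluating $(t-b)^{m}$ at $t=x$, and it is precisely the interplay of these that fuses the two pieces into the symmetric combination $(b-x)^{k+1}+(-1)^{k}(x-a)^{k+1}$. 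An equivalent and perhaps cleaner route is induction on $n$: the base case $n=1$ is a one-line integration by parts giving $(b-a)f(x)-\int_{a}^{b}K_{1}f'$, and the inductive step integrates the order-$(n+1)$ kernel term by parts once to expose the order-$n$ kernel together with the single new summand corresponding to $k=n$.
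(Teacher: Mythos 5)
Your argument is correct: the recursion $J_{m}=\frac{(x-a)^{m}}{m!}f^{(m-1)}(x)-J_{m-1}$ (and its analogue on $[x,b]$, where the boundary term at $t=b$ vanishes and evaluating $(t-b)^{m}$ at $t=x$ contributes the factor $(-1)^{m}$) unrolls exactly as you state, and multiplying by $(-1)^{n}$ and adding does reproduce the symmetric sum $\frac{(b-x)^{k+1}+(-1)^{k}(x-a)^{k+1}}{(k+1)!}f^{(k)}(x)$. Note that the paper itself gives no proof of this lemma --- it is quoted verbatim from Cerone, Dragomir and Roumeliotis --- but your repeated integration by parts (equivalently, the induction on $n$ you sketch) is precisely the standard argument in that source, and the absolute continuity of $f^{(n-1)}$ is indeed what licenses the final integration by parts.
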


\begin{corollary}
With the above assumptions, we have the representation:%
\begin{eqnarray*}
\int_{a}^{b}f(t)dt &=&\overset{n-1}{\underset{k=0}{\sum }}\left[ \frac{%
1+(-1)^{k}}{(k+1)!}\right] \frac{(b-a)^{k+1}}{2^{k+1}}f^{(k)}\left( \frac{a+b%
}{2}\right) \\
&&+(-1)^{n}\int_{a}^{b}M_{n}(t)f^{(n)}(t)dt
\end{eqnarray*}%
where 
\begin{equation*}
M_{n}(t)=\left\{ 
\begin{array}{cc}
\frac{(t-a)^{n}}{n!} & if\text{ }t\in \left[ a,\frac{a+b}{2}\right] \\ 
&  \\ 
\frac{(t-b)^{n}}{n!} & if\text{ }t\in \left( \frac{a+b}{2},b\right] .%
\end{array}%
\right.
\end{equation*}
\end{corollary}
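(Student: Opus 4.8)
The plan is to derive this representation as a direct specialization of Lemma \ref{cet}, taking the interior point to be the midpoint $x=\frac{a+b}{2}$. The hypotheses of the corollary coincide with those of the lemma (absolute continuity of $f^{(n-1)}$ on $[a,b]$), so the lemma's identity applies verbatim once $x$ is fixed at $\frac{a+b}{2}$; it then remains only to simplify the resulting coefficients and to identify the specialized kernel.

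First I would record the two length computations at the midpoint, namely $b-x=b-\frac{a+b}{2}=\frac{b-a}{2}$ and $x-a=\frac{a+b}{2}-a=\frac{b-a}{2}$, so that both quantities coincide. Substituting these into the bracketed coefficient of $f^{(k)}(x)$ in the lemma gives
\begin{equation*}
\frac{(b-x)^{k+1}+(-1)^{k}(x-a)^{k+1}}{(k+1)!}=\frac{\left(\frac{b-a}{2}\right)^{k+1}\left[1+(-1)^{k}\right]}{(k+1)!}.
\end{equation*}
Pulling out the power of two via $\left(\frac{b-a}{2}\right)^{k+1}=\frac{(b-a)^{k+1}}{2^{k+1}}$ and evaluating the derivative at the midpoint reproduces exactly the summand $\left[\frac{1+(-1)^{k}}{(k+1)!}\right]\frac{(b-a)^{k+1}}{2^{k+1}}f^{(k)}\left(\frac{a+b}{2}\right)$ claimed in the statement.

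Next I would treat the remainder term. By definition the kernel $K_{n}(x,t)$ of the lemma, evaluated at $x=\frac{a+b}{2}$, equals $\frac{(t-a)^{n}}{n!}$ on $\left[a,\frac{a+b}{2}\right]$ and $\frac{(t-b)^{n}}{n!}$ on $\left(\frac{a+b}{2},b\right]$, which is precisely the function $M_{n}(t)$ defined in the corollary. Hence $(-1)^{n}\int_{a}^{b}K_{n}\!\left(\frac{a+b}{2},t\right)f^{(n)}(t)\,dt=(-1)^{n}\int_{a}^{b}M_{n}(t)f^{(n)}(t)\,dt$, which completes the representation.

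Since every step is either an algebraic substitution or an appeal to a definition, there is no genuine analytic obstacle here; the only point requiring a moment's care is the symmetry $b-x=x-a$ that collapses the two powers at the midpoint and produces the factor $1+(-1)^{k}$. It is worth noting, though not needed for the identity, that this factor vanishes whenever $k$ is odd, so in fact only the even-order derivatives of $f$ at the midpoint contribute to the finite sum.
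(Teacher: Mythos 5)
Your proposal is correct and follows exactly the route the paper intends: the corollary is the specialization $x=\frac{a+b}{2}$ of Lemma \ref{cet}, with the symmetry $b-x=x-a=\frac{b-a}{2}$ producing the factor $1+(-1)^{k}$ and the kernel $K_{n}\left(\frac{a+b}{2},t\right)$ reducing to $M_{n}(t)$. The paper omits the proof entirely, so your write-up simply supplies the routine verification it leaves implicit.
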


\begin{corollary}
With the above assumptions, we have the representation:%
\begin{eqnarray*}
\int_{a}^{b}f(t)dt &=&\overset{n-1}{\underset{k=0}{\sum }}\frac{(b-a)^{k+1}}{%
(k+1)!}\left[ \frac{f^{(k)}\left( a\right) +(-1)^{k}f^{(k)}\left( b\right) }{%
2}\right] \\
&&+\int_{a}^{b}T_{n}(t)f^{(n)}(t)dt
\end{eqnarray*}%
where 
\begin{equation*}
T_{n}(t)=\frac{1}{n!}\left[ \frac{(b-t)^{n}+(-1)^{n}(t-a)^{n}}{2}\right] ,
\end{equation*}%
$t\in \lbrack a,b].$
\end{corollary}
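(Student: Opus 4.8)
The plan is to obtain this trapezoidal-type representation directly from Lemma \ref{cet} by specializing the free parameter $x$ to the two endpoints of $[a,b]$ and then averaging the two resulting identities; no hypothesis beyond the absolute continuity of $f^{(n-1)}$ assumed there is required. First I would set $x=a$. Since $(x-a)^{k+1}=0$ for every $k\geq 0$, the summation collapses, and because the subinterval $[a,x]$ degenerates to a point, only the second branch of $K_n$ survives, i.e. $K_n(a,t)=\frac{(t-b)^n}{n!}$ on $(a,b]$. This gives
\begin{equation*}
\int_{a}^{b}f(t)\,dt=\sum_{k=0}^{n-1}\frac{(b-a)^{k+1}}{(k+1)!}f^{(k)}(a)+\frac{(-1)^{n}}{n!}\int_{a}^{b}(t-b)^{n}f^{(n)}(t)\,dt .
\end{equation*}

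Symmetrically, setting $x=b$ annihilates the terms $(b-x)^{k+1}$ and, by degeneracy of $(x,b]$, forces $K_n(b,t)=\frac{(t-a)^n}{n!}$ on $[a,b]$, so that
\begin{equation*}
\int_{a}^{b}f(t)\,dt=\sum_{k=0}^{n-1}\frac{(-1)^{k}(b-a)^{k+1}}{(k+1)!}f^{(k)}(b)+\frac{(-1)^{n}}{n!}\int_{a}^{b}(t-a)^{n}f^{(n)}(t)\,dt .
\end{equation*}

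Next I would add these two identities and divide by $2$. The finite sums combine termwise into $\sum_{k=0}^{n-1}\frac{(b-a)^{k+1}}{(k+1)!}\cdot\frac{f^{(k)}(a)+(-1)^{k}f^{(k)}(b)}{2}$, which is exactly the leading sum claimed. For the remainder I would apply the elementary sign identity $(-1)^{n}(t-b)^{n}=(b-t)^{n}$, rewriting the averaged kernel as
\begin{equation*}
\frac{(-1)^{n}}{2\,n!}\bigl[(t-b)^{n}+(t-a)^{n}\bigr]=\frac{1}{n!}\cdot\frac{(b-t)^{n}+(-1)^{n}(t-a)^{n}}{2}=T_{n}(t),
\end{equation*}
so that the factor $(-1)^{n}$ is absorbed into $T_n$ and the remainder appears with coefficient $+1$, matching the statement.

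The computation is entirely routine once the specialization is arranged; the only points demanding care are the correct treatment of $K_n$ at the endpoints, where one branch of the piecewise definition is supported on a degenerate interval and must be dropped, and the bookkeeping of the sign $(-1)^{n}$ when passing from $(t-b)^{n}$ to $(b-t)^{n}$. Neither of these presents a genuine obstacle, so the representation follows immediately under the hypotheses of Lemma \ref{cet}.
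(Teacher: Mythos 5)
Your proof is correct and is essentially the intended argument: the paper states this corollary without writing out a proof, but its own treatment of the analogous inequality versions (summing the $x=a$ and $x=b$ cases and halving) is exactly the specialization-and-averaging you carry out, including the absorption of the sign via $(-1)^{n}(t-b)^{n}=(b-t)^{n}$. No gaps.
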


In this paper, by using the some classical integral inequalities, H\"{o}lder
and Power-Mean integral inequality, we establish some new inequalities for
functions whose $n-th$ derivatives in absolute value are convex functions.
Our established results generalize some of those results proved in recent
papers for functions whose derivatives in absolute value are convex
functions.

\section{MAIN RESULTS}

\begin{theorem}
\label{tin}For $n\geq 1,$ let $f:[a,b]\rightarrow 
\mathbb{R}
$ be $n-$time differentiable mapping and $a<b.$ If $f^{(n)}\in L[a,b]$ and $%
\left\vert f^{(n)}\right\vert $ is convex on $[a,b],$ then 
\begin{eqnarray}
&&\left\vert \int_{a}^{b}f(t)dt-\overset{n-1}{\underset{k=0}{\sum }}\left[ 
\frac{(b-x)^{k+1}+(-1)^{k}(x-a)^{k+1}}{(k+1)!}\right] f^{(k)}(x)\right\vert
\label{3} \\
&\leq &\frac{1}{n!(b-a)}\left\{ \left\vert f^{(n)}(a)\right\vert \left[ 
\frac{(x-a)^{n+1}\left[ (n+2)(b-x)+(x-a)\right] }{(n+1)(n+2)}+\frac{%
(b-x)^{n+2}}{(n+2)}\right] \right.  \notag \\
&&\text{ \ \ \ \ \ \ \ \ \ \ }\left. +\left\vert f^{(n)}(b)\right\vert \left[
\frac{(b-x)^{n+1}\left[ (n+2)(x-a)+(b-x)\right] }{(n+1)(n+2)}+\frac{%
(x-a)^{n+2}}{(n+2)}\right] \right\} .  \notag
\end{eqnarray}
\end{theorem}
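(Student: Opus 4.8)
The plan is to start from the integral identity in Lemma~\ref{cet}, which already isolates the quantity whose absolute value we must bound. Rearranging that identity gives
\begin{equation*}
\int_{a}^{b}f(t)dt-\sum_{k=0}^{n-1}\left[ \frac{(b-x)^{k+1}+(-1)^{k}(x-a)^{k+1}}{(k+1)!}\right] f^{(k)}(x)=(-1)^{n}\int_{a}^{b}K_{n}(x,t)f^{(n)}(t)dt.
\end{equation*}
Taking absolute values and applying the triangle inequality for integrals produces the upper bound $\int_{a}^{b}\left\vert K_{n}(x,t)\right\vert \left\vert f^{(n)}(t)\right\vert dt$. Since on $[a,x]$ we have $\left\vert K_{n}(x,t)\right\vert =(t-a)^{n}/n!$ and on $(x,b]$ we have $\left\vert K_{n}(x,t)\right\vert =(b-t)^{n}/n!$, I would split this into the two pieces $\frac{1}{n!}\int_{a}^{x}(t-a)^{n}\left\vert f^{(n)}(t)\right\vert dt$ and $\frac{1}{n!}\int_{x}^{b}(b-t)^{n}\left\vert f^{(n)}(t)\right\vert dt$.

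The next step is to exploit the convexity of $\left\vert f^{(n)}\right\vert $. For every $t\in \lbrack a,b]$, writing $t$ as the convex combination $t=\frac{b-t}{b-a}a+\frac{t-a}{b-a}b$ yields the pointwise bound
\begin{equation*}
\left\vert f^{(n)}(t)\right\vert \leq \frac{b-t}{b-a}\left\vert f^{(n)}(a)\right\vert +\frac{t-a}{b-a}\left\vert f^{(n)}(b)\right\vert .
\end{equation*}
Substituting this into both integrals reduces everything to four elementary moment integrals of monomials, namely $\int_{a}^{x}(t-a)^{n}(b-t)dt$, $\int_{a}^{x}(t-a)^{n+1}dt$, $\int_{x}^{b}(b-t)^{n+1}dt$, and $\int_{x}^{b}(b-t)^{n}(t-a)dt$, each carrying the appropriate factor $\left\vert f^{(n)}(a)\right\vert $ or $\left\vert f^{(n)}(b)\right\vert $ together with the common constant $\frac{1}{n!(b-a)}$.

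These integrals are evaluated by the substitutions $u=t-a$ and $v=b-t$, which convert them into powers of $u$ and $v$ over $[0,x-a]$ and $[0,b-x]$; the mixed ones additionally use $b-t=(b-a)-u$ and $t-a=(b-a)-v$. The main obstacle is purely algebraic bookkeeping rather than any conceptual difficulty: after integrating, the coefficient of $\left\vert f^{(n)}(a)\right\vert $ emerges as $(b-a)\frac{(x-a)^{n+1}}{n+1}-\frac{(x-a)^{n+2}}{n+2}+\frac{(b-x)^{n+2}}{n+2}$, and to recognize this as the bracketed expression in \eqref{3} one writes $b-a=(b-x)+(x-a)$ and combines the two $(x-a)^{n+2}$ terms over the common denominator $(n+1)(n+2)$. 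The coefficient of $\left\vert f^{(n)}(b)\right\vert $ follows by the symmetric computation, and assembling the two coefficients reproduces exactly the right-hand side of \eqref{3}, which completes the proof.
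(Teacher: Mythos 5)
Your proposal is correct and follows essentially the same route as the paper's own proof: bound the left-hand side via the identity of Lemma~\ref{cet}, split the kernel integral at $x$, apply convexity of $\left\vert f^{(n)}\right\vert$ through the convex combination $t=\frac{b-t}{b-a}a+\frac{t-a}{b-a}b$, and evaluate the same four moment integrals. Your explicit algebraic recombination of the coefficient of $\left\vert f^{(n)}(a)\right\vert$ checks out and matches the bracketed expression in \eqref{3}.
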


\begin{proof}
From Lemma \ref{cet} and using the properties of modulus, we write%
\begin{eqnarray}
&&\left\vert \int_{a}^{b}f(t)dt-\overset{n-1}{\underset{k=0}{\sum }}\left[ 
\frac{(b-x)^{k+1}+(-1)^{k}(x-a)^{k+1}}{(k+1)!}\right] f^{(k)}(x)\right\vert
\label{2} \\
&\leq &\left\vert \int_{a}^{b}K_{n}(x,t)f^{(n)}(t)dt\right\vert  \notag \\
&=&\int_{a}^{x}\frac{(t-a)^{n}}{n!}\left\vert f^{(n)}(t)\right\vert
dt+\int_{x}^{b}\frac{(b-t)^{n}}{n!}\left\vert f^{(n)}(t)\right\vert dt 
\notag \\
&=&\int_{a}^{x}\frac{(t-a)^{n}}{n!}\left\vert f^{(n)}\left( \frac{b-t}{b-a}a+%
\frac{t-a}{b-a}b\right) \right\vert dt  \notag \\
&&+\int_{x}^{b}\frac{(b-t)^{n}}{n!}\left\vert f^{(n)}\left( \frac{b-t}{b-a}a+%
\frac{t-a}{b-a}b\right) \right\vert dt.  \notag
\end{eqnarray}%
Since $\left\vert f^{(n)}\right\vert $ is convex on $[a,b],$ we have%
\begin{eqnarray*}
&&\left\vert \int_{a}^{b}f(t)dt-\overset{n-1}{\underset{k=0}{\sum }}\left[ 
\frac{(b-x)^{k+1}+(-1)^{k}(x-a)^{k+1}}{(k+1)!}\right] f^{(k)}(x)\right\vert
\\
&\leq &\frac{1}{n!}\left\{ \int_{a}^{x}(t-a)^{n}\left[ \frac{b-t}{b-a}%
\left\vert f^{(n)}\left( a\right) \right\vert +\frac{t-a}{b-a}\left\vert
f^{(n)}\left( b\right) \right\vert \right] dt\right. \\
&&\text{ \ \ \ \ }\left. +\int_{x}^{b}(b-t)^{n}\left[ \frac{b-t}{b-a}%
\left\vert f^{(n)}\left( a\right) \right\vert +\frac{t-a}{b-a}\left\vert
f^{(n)}\left( b\right) \right\vert \right] dt\right\} .
\end{eqnarray*}%
On the other hand, we have%
\begin{eqnarray*}
\int_{a}^{x}(t-a)^{n}(b-t)dt &=&\frac{(x-a)^{n+1}\left[ (n+2)(b-x)+(x-a)%
\right] }{(n+1)(n+2)}, \\
\int_{a}^{x}(t-a)^{n+1}dt &=&\frac{(x-a)^{n+2}}{(n+2)}, \\
\int_{x}^{b}(b-t)^{n+1}dt &=&\frac{(b-x)^{n+2}}{(n+2)},
\end{eqnarray*}%
and%
\begin{equation*}
\int_{x}^{b}(b-t)^{n}(t-a)dt=\frac{(b-x)^{n+1}\left[ (n+2)(x-a)+(b-x)\right] 
}{(n+1)(n+2)}.
\end{equation*}
This completes the proof.
\end{proof}

\begin{corollary}
With the above assumptions, if we choose $x=\frac{a+b}{2},$ then we get%
\begin{eqnarray*}
&&\left\vert \int_{a}^{b}f(t)dt-\overset{n-1}{\underset{k=0}{\sum }}\left[ 
\frac{1+(-1)^{k}}{(k+1)!}\right] \frac{(b-a)^{k+1}}{2^{k+1}}f^{(k)}\left( 
\frac{a+b}{2}\right) \right\vert \\
&\leq &\frac{(b-a)^{n+1}}{2^{n}(n+1)!}\left[ \frac{\left\vert
f^{(n)}(a)\right\vert +\left\vert f^{(n)}(b)\right\vert }{2}\right] .
\end{eqnarray*}
\end{corollary}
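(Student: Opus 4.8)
The plan is to obtain this corollary as a direct specialization of Theorem \ref{tin} to the midpoint $x=\frac{a+b}{2}$, so the inequality itself is inherited from (\ref{3}) and the entire task reduces to algebraic simplification. The key observation I would record first is that at the midpoint the two ``arm lengths'' coincide,
\begin{equation*}
x-a=b-x=\frac{b-a}{2}.
\end{equation*}
Since every term on both sides of (\ref{3}) is assembled from the quantities $x-a$ and $b-x$, this symmetry collapses the general expression dramatically, and it is what makes the bound take its clean symmetric form.

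First I would treat the left-hand side. Substituting $x-a=b-x=\frac{b-a}{2}$ into the summand gives
\begin{equation*}
(b-x)^{k+1}+(-1)^{k}(x-a)^{k+1}=\left(\frac{b-a}{2}\right)^{k+1}\bigl[1+(-1)^{k}\bigr]=\frac{(b-a)^{k+1}}{2^{k+1}}\bigl[1+(-1)^{k}\bigr],
\end{equation*}
together with $f^{(k)}(x)=f^{(k)}\!\left(\frac{a+b}{2}\right)$. Hence the sum in (\ref{3}) turns into exactly the sum appearing in the corollary. (One even sees that only even $k$ contribute, since $1+(-1)^{k}$ vanishes for odd $k$, though this refinement is not needed for the statement.)

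Next I would simplify the right-hand side, where the real bookkeeping lies: evaluating the two bracketed factors multiplying $\left\vert f^{(n)}(a)\right\vert$ and $\left\vert f^{(n)}(b)\right\vert$. Inserting $x-a=b-x=\frac{b-a}{2}$ into the bracket attached to $\left\vert f^{(n)}(a)\right\vert$, the piece $(n+2)(b-x)+(x-a)$ becomes $(n+3)\frac{b-a}{2}$, and the two summands combine as
\begin{equation*}
\frac{(n+3)(b-a)^{n+2}}{2^{n+2}(n+1)(n+2)}+\frac{(b-a)^{n+2}}{2^{n+2}(n+2)}=\frac{(b-a)^{n+2}}{2^{n+2}(n+2)}\cdot\frac{2(n+2)}{n+1}=\frac{(b-a)^{n+2}}{2^{n+1}(n+1)}.
\end{equation*}
By the symmetry $x-a=b-x$, the bracket attached to $\left\vert f^{(n)}(b)\right\vert$ evaluates to the identical value, so no separate computation is required there.

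Finally I would collect the pieces: pulling out the prefactor $\frac{1}{n!(b-a)}$ from Theorem \ref{tin} and the common bracket value $\frac{(b-a)^{n+2}}{2^{n+1}(n+1)}$, the right-hand side reduces to
\begin{equation*}
\frac{1}{n!(b-a)}\cdot\frac{(b-a)^{n+2}}{2^{n+1}(n+1)}\bigl[\left\vert f^{(n)}(a)\right\vert+\left\vert f^{(n)}(b)\right\vert\bigr]=\frac{(b-a)^{n+1}}{2^{n+1}(n+1)!}\bigl[\left\vert f^{(n)}(a)\right\vert+\left\vert f^{(n)}(b)\right\vert\bigr],
\end{equation*}
which is precisely $\frac{(b-a)^{n+1}}{2^{n}(n+1)!}\cdot\frac{\left\vert f^{(n)}(a)\right\vert+\left\vert f^{(n)}(b)\right\vert}{2}$. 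I expect no genuine obstacle here; the only care needed is the bookkeeping of the powers of $2$ and the identity $n!\,(n+1)=(n+1)!$, which turns the prefactor of $2^{-(n+1)}$ into the stated $2^{-n}$ together with the averaging denominator $2$.
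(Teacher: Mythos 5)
Your proposal is correct and follows exactly the route the paper intends: the corollary is stated without proof as the specialization $x=\frac{a+b}{2}$ of Theorem \ref{tin}, and your substitution $x-a=b-x=\frac{b-a}{2}$ together with the simplification $\frac{n+3}{(n+1)(n+2)}+\frac{1}{n+2}=\frac{2}{n+1}$ reproduces the stated bound precisely. Nothing further is needed.
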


\begin{corollary}
In Theorem \ref{tin}, if we choose $x=a$ and $x=b,$ respectively$,$ we have%
\begin{eqnarray}
&&\left\vert \int_{a}^{b}f(t)dt-\overset{n-1}{\underset{k=0}{\sum }}\frac{%
(b-a)^{k+1}}{(k+1)!}f^{(k)}\left( a\right) \right\vert   \label{4} \\
&\leq &\frac{(b-a)^{n+1}}{(n+2)!}\left[ (n+1)\left\vert
f^{(n)}(a)\right\vert +\left\vert f^{(n)}(b)\right\vert \right]   \notag
\end{eqnarray}
\end{corollary}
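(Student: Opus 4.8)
The plan is to derive inequality (\ref{4}) by specializing the general bound (\ref{3}) of Theorem \ref{tin} to the endpoint $x=a$, and then to dispose of the case $x=b$ by the symmetry between the two endpoints of $[a,b]$. No fresh analytic ingredient is required: once (\ref{3}) is in hand, everything reduces to a substitution followed by algebraic simplification. Accordingly the only delicate point I foresee is the bookkeeping needed to collapse the two endpoint-weighted terms on the right into the compact factorial form.

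First I would put $x=a$ in (\ref{3}), which forces $x-a=0$ and $b-x=b-a$. On the left-hand side every contribution carrying the factor $(-1)^{k}(x-a)^{k+1}$ vanishes (since $k+1\geq 1$), so the bracketed coefficient reduces to $(b-a)^{k+1}/(k+1)!$ and the argument $f^{(k)}(x)$ becomes $f^{(k)}(a)$, leaving exactly
\[
\left\vert \int_{a}^{b}f(t)\,dt-\sum_{k=0}^{n-1}\frac{(b-a)^{k+1}}{(k+1)!}f^{(k)}(a)\right\vert ,
\]
which is precisely the left-hand side of (\ref{4}).

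Next I would simplify the right-hand side under $x=a$. The term $\dfrac{(x-a)^{n+1}\left[(n+2)(b-x)+(x-a)\right]}{(n+1)(n+2)}$ multiplying $|f^{(n)}(a)|$ disappears, as does the term $\dfrac{(x-a)^{n+2}}{n+2}$ multiplying $|f^{(n)}(b)|$, while $(n+2)(x-a)=0$ collapses the surviving bracket to $b-x=b-a$. Thus the bound becomes
\[
\frac{1}{n!(b-a)}\left\{ |f^{(n)}(a)|\frac{(b-a)^{n+2}}{n+2}+|f^{(n)}(b)|\frac{(b-a)^{n+2}}{(n+1)(n+2)}\right\} .
\]
Factoring out $(b-a)^{n+1}/n!$ and placing both summands over the common denominator $(n+1)(n+2)$ produces the constant $\dfrac{1}{n!(n+1)(n+2)}=\dfrac{1}{(n+2)!}$, so the estimate assumes the asserted shape $\dfrac{(b-a)^{n+1}}{(n+2)!}\big[(n+1)|f^{(n)}(a)|+|f^{(n)}(b)|\big]$.

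Finally, for the choice $x=b$ I would appeal to the symmetry of the kernel and identity in Lemma \ref{cet} under interchanging $a$ and $b$: setting $x=b$ forces $b-x=0$ and yields the mirror-image identity, so the corresponding bound follows from the identical computation with the roles of $a$ and $b$ reversed. The hard part, such as it is, will be nothing more than the factorial arithmetic that merges the two weighted terms, which I would confirm via $n!(n+1)(n+2)=(n+2)!$; beyond that the argument is a routine endpoint specialization of Theorem \ref{tin}.
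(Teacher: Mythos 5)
Your proposal is correct and matches the paper's intent: the paper states this corollary without proof as a routine specialization of Theorem \ref{tin}, and your substitution $x=a$ (killing the $(x-a)$ terms and merging the survivors via $n!(n+1)(n+2)=(n+2)!$) reproduces (\ref{4}) exactly, with the $x=b$ case following equally by direct substitution or by the symmetry you invoke.
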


\begin{eqnarray}
&&\left\vert \int_{a}^{b}f(t)dt-\overset{n-1}{\underset{k=0}{\sum }}\frac{%
(-1)^{k}(b-a)^{k+1}}{(k+1)!}f^{(k)}\left( b\right) \right\vert   \label{5} \\
&\leq &\frac{(b-a)^{n+1}}{(n+2)!}\left[ \left\vert f^{(n)}(a)\right\vert
+(n+1)\left\vert f^{(n)}(b)\right\vert \right] .  \notag
\end{eqnarray}

\begin{corollary}
Let the conditions of Theorem \ref{tin} hold. Then the following result is
valid. Namely,%
\begin{eqnarray}
&&\left\vert \int_{a}^{b}f(t)dt-\overset{n-1}{\underset{k=0}{\sum }}\frac{%
(b-a)^{k+1}}{(k+1)!}\left[ \frac{f^{(k)}\left( a\right)
+(-1)^{k}f^{(k)}\left( b\right) }{2}\right] \right\vert   \label{6} \\
&\leq &\frac{(b-a)^{n+1}}{(n+1)!}\left[ \frac{\left\vert
f^{(n)}(a)\right\vert +\left\vert f^{(n)}(b)\right\vert }{2}\right] .  \notag
\end{eqnarray}
\end{corollary}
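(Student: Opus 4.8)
The plan is to start from the trapezoidal-type representation given in the second Corollary of the Introduction, which writes $\int_a^b f(t)\,dt$ minus the weighted sum $\sum_{k=0}^{n-1}\frac{(b-a)^{k+1}}{(k+1)!}\left[\frac{f^{(k)}(a)+(-1)^k f^{(k)}(b)}{2}\right]$ exactly as $\int_a^b T_n(t) f^{(n)}(t)\,dt$, with $T_n(t)=\frac{1}{n!}\cdot\frac{(b-t)^n+(-1)^n(t-a)^n}{2}$. Taking absolute values and passing the modulus inside the integral reduces the left-hand side of \eqref{6} to $\int_a^b |T_n(t)|\,|f^{(n)}(t)|\,dt$, so the whole argument will run parallel to the proof of Theorem \ref{tin}, only with $K_n$ replaced by $T_n$.

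First I would control the kernel. Since $b-t\ge 0$ and $t-a\ge 0$ on $[a,b]$, the triangle inequality gives $|T_n(t)|\le \frac{1}{2\,n!}\left[(b-t)^n+(t-a)^n\right]$, regardless of the parity of $n$; this is the one place where the sign factor $(-1)^n$ must be absorbed, and it is the only slightly delicate step. Next I would insert the same convexity estimate used in Theorem \ref{tin}, namely $|f^{(n)}(t)|\le \frac{b-t}{b-a}|f^{(n)}(a)|+\frac{t-a}{b-a}|f^{(n)}(b)|$, obtained by writing $t=\frac{b-t}{b-a}a+\frac{t-a}{b-a}b$.

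It then remains to evaluate four elementary integrals over the full interval $[a,b]$: $\int_a^b (b-t)^{n+1}\,dt$, $\int_a^b (t-a)^{n+1}\,dt$, $\int_a^b (b-t)^n(t-a)\,dt$, and $\int_a^b (t-a)^n(b-t)\,dt$. The first two equal $\frac{(b-a)^{n+2}}{n+2}$ and the last two equal $\frac{(b-a)^{n+2}}{(n+1)(n+2)}$ (a Beta-integral, or the substitution $u=t-a$). Collecting the coefficients of $|f^{(n)}(a)|$ and $|f^{(n)}(b)|$ and then using the identity $\frac{1}{n+2}+\frac{1}{(n+1)(n+2)}=\frac{1}{n+1}$ collapses the bound to $\frac{(b-a)^{n+1}}{2\,(n+1)!}\left[|f^{(n)}(a)|+|f^{(n)}(b)|\right]$, which is exactly the right-hand side of \eqref{6}.

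As a shortcut worth recording, the left-hand side of \eqref{6} is precisely the modulus of the arithmetic mean of the two signed quantities appearing inside \eqref{4} and \eqref{5}; hence a single triangle inequality followed by averaging the already-established bounds \eqref{4} and \eqref{5} produces $\frac{(b-a)^{n+1}}{2(n+2)!}\big[(n+2)|f^{(n)}(a)|+(n+2)|f^{(n)}(b)|\big]$, which simplifies to the same constant. I expect the only genuine obstacle to be bookkeeping: tracking which integral multiplies $|f^{(n)}(a)|$ and which multiplies $|f^{(n)}(b)|$, so that the final collapse through $\frac{1}{n+2}+\frac{1}{(n+1)(n+2)}=\frac{1}{n+1}$ is carried out correctly.
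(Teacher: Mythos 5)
Your proof is correct, and your main line of argument is genuinely different from the paper's. The paper's entire proof is the one\--line observation you record at the end as a ``shortcut'': the left\--hand side of (\ref{6}) equals $\tfrac12\left\vert A+B\right\vert$ where $A$ and $B$ are the signed quantities bounded in (\ref{4}) and (\ref{5}), so the triangle inequality together with $\tfrac{n+2}{(n+2)!}=\tfrac{1}{(n+1)!}$ finishes it; your computation of the averaged bound agrees with this. Your primary route instead returns to the trapezoidal representation with kernel $T_{n}(t)=\tfrac{1}{n!}\cdot\tfrac{(b-t)^{n}+(-1)^{n}(t-a)^{n}}{2}$, majorizes $\left\vert T_{n}(t)\right\vert \leq \tfrac{1}{2\,n!}\left[ (b-t)^{n}+(t-a)^{n}\right]$, applies the same convexity estimate as in Theorem \ref{tin}, and evaluates the four elementary integrals; the bookkeeping checks out, since $\tfrac{1}{n+2}+\tfrac{1}{(n+1)(n+2)}=\tfrac{1}{n+1}$ collapses the coefficient of each of $\left\vert f^{(n)}(a)\right\vert$ and $\left\vert f^{(n)}(b)\right\vert$ to $\tfrac{(b-a)^{n+1}}{2(n+1)!}$, which is exactly the stated constant. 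What your direct route buys is visibility of the slack: for odd $n$ the kernel is $\tfrac{1}{2\,n!}\left\vert (b-t)^{n}-(t-a)^{n}\right\vert$, strictly smaller than your majorant away from the endpoints, so a sharper constant is in principle available along your path but is invisible along the paper's. What the paper's route buys is brevity: it requires no new integral computations, only the two already\--proved corollaries.
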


\begin{proof}
Summing the inequalities (\ref{4}) and (\ref{5}) and by using the triangle
inequality, we have the inequality (\ref{6}).
\end{proof}

\begin{corollary}
\label{AA}In Theorem \ref{tin}, if we have $n=1,$ then%
\begin{eqnarray*}
&&\left\vert f(x)-\frac{1}{b-a}\int_{a}^{b}f(t)dt\right\vert \\
&\leq &\frac{1}{(b-a)^{2}}\left\{ \left[ \frac{(x-a)^{2}\left[ 3(b-x)+(x-a)%
\right] }{6}+\frac{(b-x)^{3}}{3}\right] \left\vert f^{\prime }(a)\right\vert
\right. \\
&&\text{ \ \ \ \ \ \ \ \ }\left. +\left[ \frac{(b-x)^{2}\left[ 3(x-a)+(b-x)%
\right] }{6}+\frac{(x-a)^{3}}{3}\right] \left\vert f^{\prime }(b)\right\vert
\right\} .
\end{eqnarray*}
\end{corollary}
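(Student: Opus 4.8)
The plan is to derive this statement as the direct specialization of Theorem \ref{tin} to the case $n=1$, so the work is pure bookkeeping rather than any new estimation. First I would evaluate the left-hand side of (\ref{3}) at $n=1$: the sum $\sum_{k=0}^{n-1}$ collapses to its single $k=0$ term, namely $\left[ (b-x)+(x-a)\right] f(x)=(b-a)f(x)$, so that side becomes $\left\vert \int_{a}^{b}f(t)\,dt-(b-a)f(x)\right\vert$.

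Next I would simplify the right-hand side of (\ref{3}) at $n=1$. The prefactor $\frac{1}{n!(b-a)}$ reduces to $\frac{1}{b-a}$, while $(n+1)(n+2)=6$ and $(n+2)=3$ turn the two bracketed polynomials into
\[
\frac{(x-a)^{2}\left[ 3(b-x)+(x-a)\right] }{6}+\frac{(b-x)^{3}}{3}
\qquad\text{and}\qquad
\frac{(b-x)^{2}\left[ 3(x-a)+(b-x)\right] }{6}+\frac{(x-a)^{3}}{3},
\]
which are exactly the brackets multiplying $\left\vert f^{\prime }(a)\right\vert$ and $\left\vert f^{\prime }(b)\right\vert$ in the statement.

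Finally I would pass to the averaged form by factoring $(b-a)$ out of the modulus. Since $b>a$, we have $\left\vert \int_{a}^{b}f(t)\,dt-(b-a)f(x)\right\vert =(b-a)\left\vert f(x)-\frac{1}{b-a}\int_{a}^{b}f(t)\,dt\right\vert$, so dividing the whole inequality through by $(b-a)$ replaces the prefactor $\frac{1}{b-a}$ by $\frac{1}{(b-a)^{2}}$ and produces precisely the claimed bound. I do not expect any genuine obstacle here: the convexity estimate and the four elementary definite integrals were already discharged in the proof of Theorem \ref{tin}, and the only mild point is keeping the correct orientation inside the absolute value when extracting the positive factor $(b-a)$.
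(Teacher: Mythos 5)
Your proposal is correct and is exactly the route the paper intends: the corollary is the direct specialization of Theorem \ref{tin} to $n=1$, with the $k=0$ term of the sum giving $(b-a)f(x)$ and the factor $(b-a)$ then divided out of both sides. All the substitutions ($(n+1)(n+2)=6$, $(n+2)=3$, $1/n!=1$) check out, so there is nothing to add.
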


\begin{theorem}
\label{yill}Let $f:[a,b]\rightarrow 
\mathbb{R}
$ be $n-$time differentiable mapping and $a<b.$ If $f^{(n)}\in L[a,b]$ and $%
\left\vert f^{(n)}\right\vert ^{q}$ is convex on $[a,b],$ then we have the
following inequalities: 
\begin{eqnarray}
&&\left\vert \int_{a}^{b}f(t)dt-\overset{n-1}{\underset{k=0}{\sum }}\left[ 
\frac{(b-x)^{k+1}+(-1)^{k}(x-a)^{k+1}}{(k+1)!}\right] f^{(k)}(x)\right\vert
\label{7} \\
&\leq &\frac{1}{n!(b-a)^{\frac{1}{q}}}\left\{ \frac{(x-a)^{np+1+\frac{1}{q}}%
}{np+1}\left[ \frac{(2b-a-x)}{2}\left\vert f^{(n)}(a)\right\vert ^{q}+\frac{%
(x-a)}{2}\left\vert f^{(n)}(b)\right\vert ^{q}\right] ^{\frac{1}{q}}\right. 
\notag \\
&&\text{ \ \ \ \ \ \ \ \ \ \ \ \ }\left. +\frac{(b-x)^{np+1+\frac{1}{q}}}{%
np+1}\left[ \frac{(b-x)}{2}\left\vert f^{(n)}(a)\right\vert ^{q}+\frac{%
(b+x-2a)}{2}\left\vert f^{(n)}(b)\right\vert ^{q}\right] ^{\frac{1}{q}%
}\right\}  \notag
\end{eqnarray}%
where $\frac{1}{p}+\frac{1}{q}=1.$
\end{theorem}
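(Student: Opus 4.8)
The plan is to start from the same integral representation used in the first theorem, namely the identity from Lemma \ref{cet}, and bound the left-hand side by $\int_a^b |K_n(x,t)|\,|f^{(n)}(t)|\,dt$, which splits into an integral over $[a,x]$ with kernel $(t-a)^n/n!$ and an integral over $(x,b]$ with kernel $(b-t)^n/n!$. The key difference from Theorem \ref{tin} is that we now only assume $|f^{(n)}|^q$ is convex, so we cannot simply pull out a linear convex bound; instead the appearance of the exponent $p$ in $(x-a)^{np+1}$ and $(b-x)^{np+1}$ signals that the main tool is H\"older's inequality with conjugate exponents $p,q$ satisfying $\frac1p+\frac1q=1$.

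The core step is to apply H\"older's inequality to each of the two integrals separately. For the first integral I would write
\begin{equation*}
\int_a^x \frac{(t-a)^n}{n!}\,\big|f^{(n)}(t)\big|\,dt
\le \frac{1}{n!}\left(\int_a^x (t-a)^{np}\,dt\right)^{\frac1p}
\left(\int_a^x \big|f^{(n)}(t)\big|^q\,dt\right)^{\frac1q},
\end{equation*}
and similarly for the second with $(b-t)^n$. The first factor is elementary: $\int_a^x (t-a)^{np}\,dt = (x-a)^{np+1}/(np+1)$, which after raising to the power $1/p$ produces the $(x-a)^{(np+1)/p}$ prefactor; combined with the bookkeeping this accounts for the $(x-a)^{np+1+1/q}$ term once one notes $\tfrac{np+1}{p}=np+1-\tfrac{np+1}{q}$ and reorganizes the $(np+1)$ denominators. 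The second, harder factor requires the convexity hypothesis: writing $t=\frac{b-t}{b-a}a+\frac{t-a}{b-a}b$, convexity of $|f^{(n)}|^q$ gives $|f^{(n)}(t)|^q \le \frac{b-t}{b-a}|f^{(n)}(a)|^q+\frac{t-a}{b-a}|f^{(n)}(b)|^q$, and then I integrate this linear-in-$t$ bound over $[a,x]$.

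The main obstacle I anticipate is the clean evaluation of $\int_a^x |f^{(n)}(t)|^q\,dt$ after inserting the convexity bound: one must compute $\int_a^x (b-t)\,dt$ and $\int_a^x (t-a)\,dt$ and verify they collapse into the bracketed expressions $\frac{2b-a-x}{2}$ and $\frac{x-a}{2}$ (up to the overall factor $(x-a)/(b-a)$ that must be tracked carefully against the $1/(b-a)^{1/q}$ out front). Explicitly $\int_a^x (b-t)\,dt = (x-a)\frac{2b-a-x}{2}$ and $\int_a^x (t-a)\,dt = \frac{(x-a)^2}{2}$, so dividing by $(b-a)$ and factoring out $(x-a)/(b-a)$ yields the stated bracket; the matching computation over $(x,b]$ gives $\frac{b-x}{2}|f^{(n)}(a)|^q+\frac{b+x-2a}{2}|f^{(n)}(b)|^q$. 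The remaining care is purely in matching exponents so that the leftover powers of $(x-a)$, $(b-x)$ and $(b-a)$ assemble into $(x-a)^{np+1+1/q}$, $(b-x)^{np+1+1/q}$, and the single global factor $(b-a)^{-1/q}$; I would track these powers symbolically rather than numerically to avoid sign or off-by-one errors. Summing the two H\"older estimates and dividing through by $n!$ then gives inequality (\ref{7}).
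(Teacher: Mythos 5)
Your strategy is exactly the paper's: start from Lemma \ref{cet}, split at $x$, apply H\"{o}lder with exponents $p$ and $q$ to each piece (putting the full power $(t-a)^{np}$, resp. $(b-t)^{np}$, into the $L^{p}$ factor), then use convexity of $|f^{(n)}|^{q}$ together with the elementary integrals $\int_{a}^{x}(b-t)\,dt=(x-a)\frac{2b-a-x}{2}$, $\int_{a}^{x}(t-a)\,dt=\frac{(x-a)^{2}}{2}$ and their counterparts on $(x,b]$. That part of your argument is sound and matches the paper essentially line for line, including the bracketed expressions.

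The gap is in your exponent bookkeeping at the end. After H\"{o}lder you correctly obtain the prefactor $\left( \frac{(x-a)^{np+1}}{np+1}\right) ^{1/p}=\frac{(x-a)^{(np+1)/p}}{(np+1)^{1/p}}$, and the convexity factor contributes an additional $(x-a)^{1/q}(b-a)^{-1/q}$. The total power of $(x-a)$ is therefore $\frac{np+1}{p}+\frac{1}{q}=n+\frac{1}{p}+\frac{1}{q}=n+1$, not $np+1+\frac{1}{q}$, and the denominator is $(np+1)^{1/p}$, not $np+1$. Your identity $\frac{np+1}{p}=np+1-\frac{np+1}{q}$ is true but does not reconcile this: adding the $\frac{1}{q}$ from the second factor gives total exponent $np+1-\frac{np}{q}$, which equals $np+1+\frac{1}{q}$ only if $np=-1$. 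So your argument, carried out correctly, proves the bound with $\frac{(x-a)^{n+1}}{(np+1)^{1/p}}$ and $\frac{(b-x)^{n+1}}{(np+1)^{1/p}}$ in place of $\frac{(x-a)^{np+1+\frac{1}{q}}}{np+1}$ and $\frac{(b-x)^{np+1+\frac{1}{q}}}{np+1}$; it does not yield (\ref{7}) as stated, and since the claimed prefactor can be strictly smaller than the correct one (for instance when $b-a<1$), the discrepancy cannot be waved away as a weakening. You should be aware that the paper's own proof contains the identical slip (it silently drops the exponent $\frac{1}{p}$ on $\int_{a}^{x}(t-a)^{np}dt$ when passing to its displayed bound), so the problem originates in the source; but your attempt to justify the exponent $np+1+\frac{1}{q}$ via the $\frac{np+1}{p}$ identity is precisely where your write-up would fail.
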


\begin{proof}
From Lemma \ref{cet}, we have%
\begin{eqnarray*}
&&\left\vert \int_{a}^{b}f(t)dt-\overset{n-1}{\underset{k=0}{\sum }}\left[ 
\frac{(b-x)^{k+1}+(-1)^{k}(x-a)^{k+1}}{(k+1)!}\right] f^{(k)}(x)\right\vert
\\
&\leq &\left\vert \int_{a}^{b}K_{n}(x,t)f^{(n)}(t)dt\right\vert \\
&=&\int_{a}^{x}\frac{(t-a)^{n}}{n!}\left\vert f^{(n)}(t)\right\vert
dt+\int_{x}^{b}\frac{(b-t)^{n}}{n!}\left\vert f^{(n)}(t)\right\vert dt.
\end{eqnarray*}%
By H\"{o}lder inequality, we obtain%
\begin{eqnarray*}
&&\left\vert \int_{a}^{b}f(t)dt-\overset{n-1}{\underset{k=0}{\sum }}\left[ 
\frac{(b-x)^{k+1}+(-1)^{k}(x-a)^{k+1}}{(k+1)!}\right] f^{(k)}(x)\right\vert
\\
&\leq &\frac{1}{n!}\left\{ \left( \int_{a}^{x}(t-a)^{np}dt\right) ^{\frac{1}{%
p}}\left( \int_{a}^{x}\left\vert f^{(n)}(t)\right\vert ^{q}dt\right) ^{\frac{%
1}{q}}\right. \\
&&\text{ \ \ \ }\left. +\left( \int_{x}^{b}(b-t)^{np}dt\right) ^{\frac{1}{p}%
}\left( \int_{x}^{b}\left\vert f^{(n)}(t)\right\vert ^{q}dt\right) ^{\frac{1%
}{q}}\right\} .
\end{eqnarray*}%
Since $\left\vert f^{(n)}\right\vert ^{q}$ is convex on $[a,b]$ and $t=\frac{%
b-t}{b-a}a+\frac{t-a}{b-a}b,$ we have%
\begin{eqnarray*}
&&\left\vert \int_{a}^{b}f(t)dt-\overset{n-1}{\underset{k=0}{\sum }}\left[ 
\frac{(b-x)^{k+1}+(-1)^{k}(x-a)^{k+1}}{(k+1)!}\right] f^{(k)}(x)\right\vert
\\
&\leq &\frac{1}{n!}\left\{ \left( \int_{a}^{x}(t-a)^{np}dt\right) ^{\frac{1}{%
p}}\left( \int_{a}^{x}\left[ \frac{b-t}{b-a}\left\vert f^{(n)}\left(
a\right) \right\vert ^{q}+\frac{t-a}{b-a}\left\vert f^{(n)}\left( b\right)
\right\vert ^{q}\right] dt\right) ^{\frac{1}{q}}\right. \\
&&\text{ \ \ }\left. +\left( \int_{x}^{b}(b-t)^{np}dt\right) ^{\frac{1}{p}%
}\left( \int_{x}^{b}\left[ \frac{b-t}{b-a}\left\vert f^{(n)}\left( a\right)
\right\vert ^{q}+\frac{t-a}{b-a}\left\vert f^{(n)}\left( b\right)
\right\vert ^{q}\right] dt\right) ^{\frac{1}{q}}\right\}
\end{eqnarray*}%
\begin{eqnarray*}
&=&\frac{1}{n!}\left\{ \frac{(x-a)^{np+1}}{np+1}\left[ \frac{(x-a)(2b-a-x)}{%
2(b-a)}\left\vert f^{(n)}(a)\right\vert ^{q}+\frac{(x-a)^{2}}{2(b-a)}%
\left\vert f^{(n)}(b)\right\vert ^{q}\right] ^{\frac{1}{q}}\right. \\
&&\text{\ \ \ \ }\left. +\frac{(b-x)^{np+1}}{np+1}\left[ \frac{(b-x)^{2}}{%
2(b-a)}\left\vert f^{(n)}(a)\right\vert ^{q}+\frac{(b-x)(b+x-2a)}{2(b-a)}%
\left\vert f^{(n)}(b)\right\vert ^{q}\right] ^{\frac{1}{q}}\right\} .
\end{eqnarray*}%
This completes the proof.
\end{proof}

\begin{corollary}
Assume that $f$ is as in Teorem \ref{yill}. If we choose $x=\frac{a+b}{2},$
then we have%
\begin{eqnarray*}
&&\left\vert \int_{a}^{b}f(t)dt-\overset{n-1}{\underset{k=0}{\sum }}\left[ 
\frac{1+(-1)^{k}}{(k+1)!}\right] \left( \frac{b-a}{2}\right)
^{k+1}f^{(k)}\left( \frac{a+b}{2}\right) \right\vert \\
&\leq &\left( \frac{b-a}{2}\right) ^{np+1+\frac{1}{q}}\frac{1}{(np+1)n!} \\
&&\times \left\{ \left[ \frac{3\left\vert f^{(n)}(a)\right\vert
^{q}+\left\vert f^{(n)}(b)\right\vert ^{q}}{4}\right] ^{\frac{1}{q}}+\left[ 
\frac{\left\vert f^{(n)}(a)\right\vert ^{q}+3\left\vert
f^{(n)}(b)\right\vert ^{q}}{4}\right] ^{\frac{1}{q}}\right\} .
\end{eqnarray*}
\end{corollary}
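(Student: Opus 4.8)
The plan is to obtain this corollary as a direct specialization of Theorem \ref{yill}: I would substitute $x = \frac{a+b}{2}$ into inequality (\ref{7}) and simplify both sides. Since the hypotheses on $f$ are inherited verbatim from the theorem, no new analytic estimate is needed; the entire argument is algebraic bookkeeping of the midpoint values, and the H\"older step has already been absorbed into (\ref{7}).

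First I would record the four basic evaluations at $x = \frac{a+b}{2}$. Here $x - a = b - x = \frac{b-a}{2}$, and a short computation gives $2b - a - x = b + x - 2a = \frac{3(b-a)}{2}$. With these identities in hand, the left-hand side of (\ref{7}) collapses as follows: the coefficient $\frac{(b-x)^{k+1} + (-1)^{k}(x-a)^{k+1}}{(k+1)!}$ becomes $\frac{1 + (-1)^{k}}{(k+1)!}\left(\frac{b-a}{2}\right)^{k+1}$ because $b-x$ and $x-a$ coincide, while $f^{(k)}(x)$ becomes $f^{(k)}\left(\frac{a+b}{2}\right)$. This reproduces exactly the summand in the statement.

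For the right-hand side I would substitute the same evaluations into each of the two bracketed factors. The first bracket becomes $\frac{3(b-a)}{4}|f^{(n)}(a)|^{q} + \frac{b-a}{4}|f^{(n)}(b)|^{q} = \frac{b-a}{4}\left[3|f^{(n)}(a)|^{q} + |f^{(n)}(b)|^{q}\right]$, and the second becomes $\frac{b-a}{4}\left[|f^{(n)}(a)|^{q} + 3|f^{(n)}(b)|^{q}\right]$. Moreover both prefactors $\frac{(x-a)^{np+1+1/q}}{np+1}$ and $\frac{(b-x)^{np+1+1/q}}{np+1}$ equal $\frac{1}{np+1}\left(\frac{b-a}{2}\right)^{np+1+1/q}$.

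The one point that requires care, and which I expect to be the only place an error could slip in, is reconciling the powers of $(b-a)$. After pulling the common factor $\left(\frac{b-a}{4}\right)^{1/q}$ out of each bracket, the leading constant is $\frac{1}{(b-a)^{1/q}}\cdot\left(\frac{b-a}{4}\right)^{1/q} = 4^{-1/q}$; absorbing this $4^{-1/q}$ back inside each bracket turns $\left[3|f^{(n)}(a)|^{q} + |f^{(n)}(b)|^{q}\right]^{1/q}$ into $\left[\frac{3|f^{(n)}(a)|^{q} + |f^{(n)}(b)|^{q}}{4}\right]^{1/q}$, exactly as stated, and collecting the surviving factor $\frac{1}{n!(np+1)}\left(\frac{b-a}{2}\right)^{np+1+1/q}$ completes the derivation.
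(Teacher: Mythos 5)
Your proposal is correct and is exactly the argument the paper intends: the corollary is stated as a direct substitution of $x=\frac{a+b}{2}$ into inequality (\ref{7}), and your evaluations $x-a=b-x=\frac{b-a}{2}$, $2b-a-x=b+x-2a=\frac{3(b-a)}{2}$, together with the cancellation of $(b-a)^{1/q}$ against the factor $\left(\frac{b-a}{4}\right)^{1/q}$ pulled out of each bracket, reproduce the stated bound. The paper offers no written proof for this corollary, so your bookkeeping supplies precisely the omitted verification.
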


\begin{corollary}
With the above assumptions, if we choose $x=a$ and $x=b,$ respectively$,$ we
have%
\begin{eqnarray}
&&\left\vert \int_{a}^{b}f(t)dt-\overset{n-1}{\underset{k=0}{\sum }}\frac{%
(b-a)^{k+1}}{(k+1)!}f^{(k)}\left( a\right) \right\vert   \label{8} \\
&\leq &\frac{(b-a)^{np+1+\frac{1}{q}}}{(np+1)n!}\left[ \frac{\left\vert
f^{(n)}(a)\right\vert ^{q}+\left\vert f^{(n)}(b)\right\vert ^{q}}{2}\right]
^{\frac{1}{q}}  \notag
\end{eqnarray}
\end{corollary}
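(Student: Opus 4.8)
The plan is to obtain this corollary as a direct specialization of Theorem \ref{yill}, substituting $x=a$ (and, symmetrically, $x=b$) into inequality (\ref{7}) and simplifying. No new analytic ingredient is required; the entire content is the algebraic bookkeeping of the powers of $(b-a)$, so I expect this to be routine rather than to present a genuine obstacle.

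First I would set $x=a$ in (\ref{7}). On the left-hand side the bracketed coefficient of $f^{(k)}(x)$ becomes $\frac{(b-a)^{k+1}+(-1)^{k}\cdot 0}{(k+1)!}=\frac{(b-a)^{k+1}}{(k+1)!}$, which is exactly the summand appearing in (\ref{8}). On the right-hand side the first term carries the factor $(x-a)^{np+1+\frac1q}=0$ and therefore drops out entirely, so only the second term survives.

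Next I would simplify that surviving term. With $x=a$ one has $b-x=b-a$ and $b+x-2a=b-a$, so the bracket reduces to $\frac{b-a}{2}\bigl(\lvert f^{(n)}(a)\rvert^{q}+\lvert f^{(n)}(b)\rvert^{q}\bigr)$. Raising to the power $\frac1q$ extracts a factor $(b-a)^{1/q}$, which cancels against the $(b-a)^{-1/q}$ hidden in the prefactor $\frac{1}{n!(b-a)^{1/q}}$. Collecting the remaining powers, $-\tfrac1q+\bigl(np+1+\tfrac1q\bigr)+\tfrac1q=np+1+\tfrac1q$, leaves $\frac{(b-a)^{np+1+1/q}}{(np+1)\,n!}$ multiplying $\bigl[\tfrac{\lvert f^{(n)}(a)\rvert^{q}+\lvert f^{(n)}(b)\rvert^{q}}{2}\bigr]^{1/q}$, which is precisely the bound claimed in (\ref{8}).

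The choice $x=b$ is treated identically by symmetry: now $b-x=0$ annihilates the second term, while the first term, using $x-a=b-a$ and $2b-a-x=b-a$, collapses to the same right-hand side, and the left-hand coefficient becomes $\frac{(-1)^{k}(b-a)^{k+1}}{(k+1)!}$. Since the argument is nothing more than a substitution followed by a cancellation, the only point demanding care is keeping the $(b-a)^{1/q}$ factors straight so that the prefactor and the convex-combination bracket combine to give the single clean exponent $np+1+\tfrac1q$.
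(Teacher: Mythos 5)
Your proposal is correct and is exactly the argument the paper intends: the corollary is obtained by substituting $x=a$ (resp.\ $x=b$) into inequality (\ref{7}) of Theorem \ref{yill}, noting that one of the two terms vanishes and that the surviving bracket contributes a factor $(b-a)^{1/q}$ which combines with the prefactor to give the exponent $np+1+\frac{1}{q}$. The bookkeeping of the powers of $(b-a)$ in your write-up checks out, so there is nothing further to add.
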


\begin{eqnarray}
&&\left\vert \int_{a}^{b}f(t)dt-\overset{n-1}{\underset{k=0}{\sum }}\frac{%
(-1)^{k}(b-a)^{k+1}}{(k+1)!}f^{(k)}\left( b\right) \right\vert   \label{9} \\
&\leq &\frac{(b-a)^{np+1+\frac{1}{q}}}{(np+1)n!}\left[ \frac{\left\vert
f^{(n)}(a)\right\vert ^{q}+\left\vert f^{(n)}(b)\right\vert ^{q}}{2}\right]
^{\frac{1}{q}}.  \notag
\end{eqnarray}

\begin{corollary}
Let the conditions of Theorem \ref{yill} hold. Then the following result is
valid. Namely,%
\begin{eqnarray}
&&\left\vert \int_{a}^{b}f(t)dt-\overset{n-1}{\underset{k=0}{\sum }}\frac{%
(b-a)^{k+1}}{(k+1)!}\left[ \frac{f^{(k)}\left( a\right)
+(-1)^{k}f^{(k)}\left( b\right) }{2}\right] \right\vert   \label{10} \\
&\leq &\frac{(b-a)^{n+1}}{(n+1)!}\left[ \frac{\left\vert
f^{(n)}(a)\right\vert +\left\vert f^{(n)}(b)\right\vert }{2}\right] .  \notag
\end{eqnarray}
\end{corollary}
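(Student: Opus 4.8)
The cleanest route to the stated first-power bound is to work directly from the trapezoidal kernel rather than to average the endpoint estimates \eqref{8} and \eqref{9}; averaging those two would only reproduce the $q$-power mean $\left[\frac{|f^{(n)}(a)|^q+|f^{(n)}(b)|^q}{2}\right]^{1/q}$ inherited from Theorem \ref{yill}, whereas the right-hand side of \eqref{10} carries only the arithmetic mean. So first I would invoke the trapezoid-type representation recorded in the Introduction (the third corollary): the quantity inside the absolute value in \eqref{10} is exactly the remainder $\int_a^b T_n(t)f^{(n)}(t)\,dt$ with $T_n(t)=\frac{1}{n!}\cdot\frac{(b-t)^n+(-1)^n(t-a)^n}{2}$. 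Passing to moduli then gives
\begin{align*}
&\left|\int_a^b f(t)\,dt-\sum_{k=0}^{n-1}\frac{(b-a)^{k+1}}{(k+1)!}\,\frac{f^{(k)}(a)+(-1)^k f^{(k)}(b)}{2}\right|\\
&\qquad=\left|\int_a^b T_n(t)\,f^{(n)}(t)\,dt\right|\le\int_a^b |T_n(t)|\,|f^{(n)}(t)|\,dt.
\end{align*}

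Next I would majorize $|f^{(n)}(t)|$ on $[a,b]$ by the chord through its endpoint values, i.e. establish $|f^{(n)}(t)|\le \frac{b-t}{b-a}|f^{(n)}(a)|+\frac{t-a}{b-a}|f^{(n)}(b)|$, and substitute this into the last integral. The two weight integrals that arise, $\int_a^b|T_n(t)|\frac{b-t}{b-a}\,dt$ and $\int_a^b|T_n(t)|\frac{t-a}{b-a}\,dt$, are equal: the kernel satisfies $|T_n(a+b-t)|=|T_n(t)|$ (for even $n$ one has $T_n\ge 0$ with $T_n(a+b-t)=T_n(t)$, while for odd $n$ one has $T_n(a+b-t)=-T_n(t)$), so the substitution $t\mapsto a+b-t$ interchanges the two weights. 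Hence each of them equals $\tfrac12\int_a^b|T_n(t)|\,dt$, and the estimate collapses to $\frac{|f^{(n)}(a)|+|f^{(n)}(b)|}{2}\int_a^b|T_n(t)|\,dt$.

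It then remains to evaluate $\int_a^b|T_n(t)|\,dt$. For even $n$ the integrand is nonnegative and $\int_a^b\frac{(b-t)^n+(t-a)^n}{2\,n!}\,dt=\frac{(b-a)^{n+1}}{(n+1)!}$ exactly; for odd $n$ the sign of $(b-t)^n-(t-a)^n$ changes at the midpoint, and splitting the integral at $\frac{a+b}{2}$ gives $\frac{(b-a)^{n+1}}{(n+1)!}\bigl(1-2^{-n}\bigr)$, still at most $\frac{(b-a)^{n+1}}{(n+1)!}$. In both cases $\int_a^b|T_n(t)|\,dt\le \frac{(b-a)^{n+1}}{(n+1)!}$, so combining with the previous paragraph produces precisely the right-hand side of \eqref{10}.

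I expect the genuine obstacle to be the chord majorization of $|f^{(n)}|$ invoked in the second step. The hypothesis carried over from Theorem \ref{yill} is the convexity of $|f^{(n)}|^q$, which supplies $|f^{(n)}(t)|\le\bigl[\frac{b-t}{b-a}|f^{(n)}(a)|^q+\frac{t-a}{b-a}|f^{(n)}(b)|^q\bigr]^{1/q}$; but since $s\mapsto s^{1/q}$ is concave, the power-mean inequality bounds that $q$-mean below, not above, by the arithmetic mean, so the linear majorant does not follow formally from the $q$-convexity alone. The chord estimate I need is exactly the statement that $|f^{(n)}|$ lies under its chord, that is, the convexity of $|f^{(n)}|$ itself (the hypothesis of Theorem \ref{tin}); under that reading every remaining step goes through verbatim and the constant $\frac{(b-a)^{n+1}}{(n+1)!}$ together with the first-power mean emerge exactly as written in \eqref{10}. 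Bridging this gap between the $q$-convexity hypothesis and the arithmetic-mean conclusion is therefore the crux of the argument, and it is the step I would scrutinize most carefully.
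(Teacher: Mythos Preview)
Your diagnosis is exactly right, and in fact the paper's own proof suffers from the very defect you flag. The paper's argument is one line: ``Summing the inequalities (\ref{8}) and (\ref{9}) and by using the triangle inequality, we have the inequality (\ref{10}).'' But averaging (\ref{8}) and (\ref{9}) and applying the triangle inequality produces
\[
\left|\int_a^b f(t)\,dt-\sum_{k=0}^{n-1}\frac{(b-a)^{k+1}}{(k+1)!}\cdot\frac{f^{(k)}(a)+(-1)^k f^{(k)}(b)}{2}\right|
\le \frac{(b-a)^{np+1+\frac1q}}{(np+1)\,n!}\left[\frac{|f^{(n)}(a)|^q+|f^{(n)}(b)|^q}{2}\right]^{1/q},
\]
with the $q$-power mean and the H\"older constant inherited from (\ref{8})--(\ref{9}), not the arithmetic mean and $(b-a)^{n+1}/(n+1)!$ displayed in (\ref{10}). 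Since the $q$-mean dominates the arithmetic mean for $q>1$ and the prefactor does not reduce to $(b-a)^{n+1}/(n+1)!$ either, the paper's own argument does not establish (\ref{10}) as written. Note that the right-hand side of (\ref{10}) is word-for-word identical to that of (\ref{6}), which was obtained under the hypothesis of Theorem~\ref{tin} (convexity of $|f^{(n)}|$ itself); (\ref{10}) appears to be a transcription slip.

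Your route through the trapezoidal kernel $T_n$ is a clean and correct proof of the arithmetic-mean bound under the Theorem~\ref{tin} hypothesis, and it is a genuinely different argument from the paper's endpoint-averaging device: you exploit the symmetry $|T_n(a+b-t)|=|T_n(t)|$ and compute $\int_a^b|T_n|$ directly, whereas the paper specializes the general Ostrowski-type estimate at $x=a$ and $x=b$ and averages. Both approaches deliver (\ref{6}); neither delivers (\ref{10}) from the $q$-convexity hypothesis of Theorem~\ref{yill} alone, for precisely the reason you isolate in your last paragraph --- that hypothesis bounds $|f^{(n)}(t)|$ above only by the $q$-mean of its endpoint values, not by their arithmetic mean. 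So the ``crux'' you identify is not a gap in your reasoning but an inconsistency in the statement of the corollary itself.
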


\begin{proof}
Summing the inequalities (\ref{8}) and (\ref{9}) and by using the triangle
inequality, we have the inequality (\ref{10}).
\end{proof}

\begin{corollary}
In the inequalities (\ref{7}), if we choose $n=1,$ then we have%
\begin{eqnarray*}
&&\left\vert f(x)-\frac{1}{b-a}\int_{a}^{b}f(t)dt\right\vert \\
&\leq &\frac{1}{(b-a)^{1+\frac{1}{q}}}\left\{ \frac{(x-a)^{p+1+\frac{1}{q}}}{%
p+1}\left[ \frac{(2b-a-x)}{2}\left\vert f^{\prime }(a)\right\vert ^{q}+\frac{%
(x-a)}{2}\left\vert f^{\prime }(b)\right\vert ^{q}\right] ^{\frac{1}{q}%
}\right. \\
&&\text{ \ \ \ \ \ \ \ \ \ \ \ \ \ }\left. +\frac{(b-x)^{p+1+\frac{1}{q}}}{%
p+1}\left[ \frac{(b-x)}{2}\left\vert f^{\prime }(a)\right\vert ^{q}+\frac{%
(b+x-2a)}{2}\left\vert f^{\prime }(b)\right\vert ^{q}\right] ^{\frac{1}{q}%
}\right\} .
\end{eqnarray*}
\end{corollary}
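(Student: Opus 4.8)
The plan is to derive this directly from inequality (\ref{7}) of Theorem \ref{yill} by specializing to $n=1$ and then normalizing by the factor $(b-a)$. First I would substitute $n=1$ into the left-hand side of (\ref{7}). The summation $\sum_{k=0}^{n-1}$ collapses to its single term $k=0$, whose coefficient is $\frac{(b-x)^{1}+(-1)^{0}(x-a)^{1}}{1!}=(b-x)+(x-a)=b-a$, multiplying $f^{(0)}(x)=f(x)$. Hence the left-hand side of (\ref{7}) becomes $\left\vert \int_{a}^{b}f(t)\,dt-(b-a)f(x)\right\vert$, which equals $(b-a)\left\vert f(x)-\frac{1}{b-a}\int_{a}^{b}f(t)\,dt\right\vert$.

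Next I would substitute $n=1$ into the right-hand side of (\ref{7}). This amounts to three routine replacements: $n!=1$ in the prefactor, $f^{(n)}=f'$ in the two bracketed convex combinations, and $np+1=p+1$ together with the exponents $np+1+\frac{1}{q}=p+1+\frac{1}{q}$ in the powers of $(x-a)$ and $(b-x)$. The right-hand side thus reads $\frac{1}{(b-a)^{1/q}}\bigl\{\frac{(x-a)^{p+1+1/q}}{p+1}[\frac{(2b-a-x)}{2}\left\vert f'(a)\right\vert^{q}+\frac{(x-a)}{2}\left\vert f'(b)\right\vert^{q}]^{1/q}+\frac{(b-x)^{p+1+1/q}}{p+1}[\frac{(b-x)}{2}\left\vert f'(a)\right\vert^{q}+\frac{(b+x-2a)}{2}\left\vert f'(b)\right\vert^{q}]^{1/q}\bigr\}$.

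Finally I would divide both sides of the resulting inequality by $(b-a)$. On the left this turns $(b-a)\left\vert f(x)-\frac{1}{b-a}\int_{a}^{b}f(t)\,dt\right\vert$ into exactly the Ostrowski expression $\left\vert f(x)-\frac{1}{b-a}\int_{a}^{b}f(t)\,dt\right\vert$; on the right the prefactor $\frac{1}{(b-a)^{1/q}}$ becomes $\frac{1}{(b-a)^{1+1/q}}$, yielding precisely the claimed bound. The only point requiring a little care is the bookkeeping of the $(b-a)$ exponent when passing from the integral-difference form to the averaged form, and confirming that the single surviving coefficient is $b-a$; there is no genuine analytic obstacle here, since the substitution $n=1$ is a direct specialization of an already-proved inequality.
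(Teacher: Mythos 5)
Your proposal is correct and is exactly the intended derivation: the paper states this corollary without proof, and the only step involved is the direct specialization $n=1$ in (\ref{7}) followed by division by $b-a$, which you carry out accurately (the sum collapses to $(b-a)f(x)$ and the prefactor becomes $(b-a)^{-(1+\frac{1}{q})}$). No further comment is needed.
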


\begin{theorem}
\label{A}Let $f:[a,b]\rightarrow 
\mathbb{R}
$ be $n-$time differentiable mapping and $a<b.$ If $f^{(n)}\in L[a,b]$ and $%
\left\vert f^{(n)}\right\vert ^{q}$ is convex on $[a,b],$ $\frac{1}{p}+\frac{%
1}{q}=1,$ then we have 
\begin{eqnarray}
&&  \label{11} \\
&&\left\vert \int_{a}^{b}f(t)dt-\overset{n-1}{\underset{k=0}{\sum }}\left[ 
\frac{(b-x)^{k+1}+(-1)^{k}(x-a)^{k+1}}{(k+1)!}\right] f^{(k)}(x)\right\vert 
\notag \\
&\leq &\frac{1}{n!(b-a)^{\frac{1}{q}}(p+2)^{\frac{1}{q}}}\left( \frac{q-1}{%
nq+q-p-1}\right) ^{1-\frac{1}{q}}  \notag \\
&&\times \left\{ (x-a)^{n+1}\left[ \frac{(p+2)(b-x)+(x-a)}{(p+1)}\left\vert
f^{(n)}(a)\right\vert ^{q}+(x-a)^{p+1}\left\vert f^{(n)}(b)\right\vert ^{q}%
\right] ^{\frac{1}{q}}\right.  \notag \\
&&\text{ \ \ }\left. +(b-x)^{n+1}\left[ (b-x)^{p+1}\left\vert
f^{(n)}(a)\right\vert ^{q}+\frac{(p+2)(x-a)+(b-x)}{(p+1)}\left\vert
f^{(n)}(b)\right\vert ^{q}\right] ^{\frac{1}{q}}\right\} .  \notag
\end{eqnarray}
\end{theorem}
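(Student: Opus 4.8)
The plan is to begin exactly as in the proofs of Theorems \ref{tin} and \ref{yill}: invoke Lemma \ref{cet} and the properties of the modulus to obtain
\[
\left\vert \int_{a}^{b}f(t)dt-\overset{n-1}{\underset{k=0}{\sum }}\left[ \frac{(b-x)^{k+1}+(-1)^{k}(x-a)^{k+1}}{(k+1)!}\right] f^{(k)}(x)\right\vert \leq \int_{a}^{x}\frac{(t-a)^{n}}{n!}\left\vert f^{(n)}(t)\right\vert dt+\int_{x}^{b}\frac{(b-t)^{n}}{n!}\left\vert f^{(n)}(t)\right\vert dt.
\]
The difference from Theorem \ref{yill} lies in how I would split the integrand before applying H\"{o}lder. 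Rather than peeling off the entire power $(t-a)^{n}$ into the $L^{p}$ factor, I would write $(t-a)^{n}=(t-a)^{n}$ and instead retain one copy of $(t-a)$ with the convex weight, i.e.\ factor as $(t-a)^{n}\left\vert f^{(n)}(t)\right\vert =(t-a)^{n-1/q}\cdot (t-a)^{1/q}\left\vert f^{(n)}(t)\right\vert$, or more cleanly split the power so that the H\"{o}lder exponent $p$ multiplies only part of the linear factor. The appearance of $(x-a)^{n+1}$ (rather than $(x-a)^{np+1}$) on the right-hand side signals that the weight $(t-a)^{n}$ is being distributed differently: the $L^{p}$ integral should produce $(x-a)^{n+1}$ up to the constant $\left(\frac{q-1}{nq+q-p-1}\right)^{1-1/q}$, which is the signature of $\int_{a}^{x}(t-a)^{(n-1)p/(p-1)}\,dt$ type bounds after converting $p$-exponents via $\tfrac{1}{p}=1-\tfrac{1}{q}$.

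**Applying H\"{o}lder with the corrected split.** Concretely, I would apply H\"{o}lder on each subinterval to the factorisation
\[
\int_{a}^{x}\frac{(t-a)^{n}}{n!}\left\vert f^{(n)}(t)\right\vert dt=\frac{1}{n!}\int_{a}^{x}(t-a)^{n-1}\cdot (t-a)\left\vert f^{(n)}(t)\right\vert dt
\]
with exponents $p,q$, giving an $L^{p}$ factor $\left(\int_{a}^{x}(t-a)^{(n-1)p}dt\right)^{1/p}$ and an $L^{q}$ factor $\left(\int_{a}^{x}(t-a)^{q}\left\vert f^{(n)}(t)\right\vert^{q}dt\right)^{1/q}$. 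The $L^{p}$ integral evaluates to $\frac{(x-a)^{(n-1)p+1}}{(n-1)p+1}$; converting the exponent with $1/p=1-1/q$ and simplifying the power gives $(x-a)^{n+1}$ together with the constant $\left(\frac{1}{(n-1)p+1}\right)^{1/p}=\left(\frac{q-1}{nq+q-p-1}\right)^{1-1/q}$, which matches the claimed prefactor. For the $L^{q}$ factor I would then insert convexity of $\left\vert f^{(n)}\right\vert^{q}$ in the form $\left\vert f^{(n)}(t)\right\vert^{q}\leq \frac{b-t}{b-a}\left\vert f^{(n)}(a)\right\vert^{q}+\frac{t-a}{b-a}\left\vert f^{(n)}(b)\right\vert^{q}$ and evaluate the two elementary moments $\int_{a}^{x}(t-a)^{q}(b-t)\,dt$ and $\int_{a}^{x}(t-a)^{q+1}dt$.

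**Main obstacle and the symmetric half.** The delicate step, and the one I expect to cost the most care, is reconciling the exponents: making sure the $L^{p}$ power $(n-1)p+1$ really collapses to give the outer factor $(x-a)^{n+1}$ and the constant $\left(\frac{q-1}{nq+q-p-1}\right)^{1-1/q}$ after the substitution $\tfrac{1}{p}=1-\tfrac{1}{q}$ and the extraction of the $(p+2)^{-1/q}$ term. In particular the factor $(p+2)$ in the denominators inside the brackets suggests that the power actually retained with the convex weight is $(t-a)^{p+1}$ rather than $(t-a)^{q}$, so the correct split is to leave the factor $(t-a)^{p+1}$ (respectively $(b-t)^{p+1}$) with the modulus and send $(t-a)^{n-(p+1)/q}$ into the $L^{p}$ part; I would reverse-engineer the exact split from the target right-hand side before committing. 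Once the first integral is handled, the second integral over $[x,b]$ is completely symmetric under the interchange $t-a\leftrightarrow b-t$, $x-a\leftrightarrow b-x$, and $\left\vert f^{(n)}(a)\right\vert\leftrightarrow\left\vert f^{(n)}(b)\right\vert$, so its evaluation follows by the same computation with the moments $\int_{x}^{b}(b-t)^{p+1}(t-a)\,dt$ and $\int_{x}^{b}(b-t)^{p+2}dt$. Summing the two contributions and pulling out the common factor $\frac{1}{n!(b-a)^{1/q}(p+2)^{1/q}}\left(\frac{q-1}{nq+q-p-1}\right)^{1-1/q}$ yields the stated inequality \eqref{11}, completing the proof.
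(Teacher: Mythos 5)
Your overall strategy coincides with the paper's: bound the left-hand side by $\frac{1}{n!}\int_a^x(t-a)^n\left\vert f^{(n)}(t)\right\vert dt+\frac{1}{n!}\int_x^b(b-t)^n\left\vert f^{(n)}(t)\right\vert dt$ via Lemma \ref{cet}, apply H\"{o}lder after redistributing the kernel power so that part of $(t-a)^n$ stays attached to $\left\vert f^{(n)}(t)\right\vert$, then use convexity of $\left\vert f^{(n)}\right\vert^{q}$ and evaluate elementary moments. But the only genuinely nontrivial step of this theorem, as compared with Theorem \ref{yill}, is identifying the correct split, and neither of the two splits you actually write down is the right one. The paper factors $(t-a)^{n}\left\vert f^{(n)}(t)\right\vert=\frac{(t-a)^{n}}{(t-a)^{p/q}}\cdot(t-a)^{p/q}\left\vert f^{(n)}(t)\right\vert$ and applies H\"{o}lder with exponents $\frac{q}{q-1}$ and $q$, so that the $L^{q}$ factor carries the weight $(t-a)^{p}$:
\begin{equation*}
\left( \int_{a}^{x}(t-a)^{\frac{nq-p}{q-1}}dt\right) ^{1-\frac{1}{q}}\left( \int_{a}^{x}(t-a)^{p}\left\vert f^{(n)}(t)\right\vert ^{q}dt\right) ^{\frac{1}{q}}.
\end{equation*}
The weight $(t-a)^{p}$ is forced by the target: the moments $\int_{a}^{x}(t-a)^{p}(b-t)dt=\frac{(x-a)^{p+1}\left[ (p+2)(b-x)+(x-a)\right] }{(p+1)(p+2)}$ and $\int_{a}^{x}(t-a)^{p+1}dt=\frac{(x-a)^{p+2}}{p+2}$ are exactly what produce the $(p+1)$ inside the bracket and the $(p+2)^{\frac{1}{q}}$ outside in \eqref{11}.

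Your first split, $(t-a)^{n-1}\cdot(t-a)$, puts the weight $(t-a)^{q}$ into the $L^{q}$ integral and does not even yield total power $(x-a)^{n+1}$; your ``corrected'' split puts $(t-a)^{p+1}$ there, whose moments $\int_{a}^{x}(t-a)^{p+1}(b-t)dt$ and $\int_{a}^{x}(t-a)^{p+2}dt$ would produce $(p+3)^{-\frac{1}{q}}$ outside and $(p+2)$ inside the bracket rather than $(p+2)^{-\frac{1}{q}}$ and $(p+1)$, so neither reproduces \eqref{11}. The identity you assert for the prefactor, $\left( \frac{1}{(n-1)p+1}\right) ^{\frac{1}{p}}=\left( \frac{q-1}{nq+q-p-1}\right) ^{1-\frac{1}{q}}$, is also false in general: using $q=\frac{p}{p-1}$ one finds $\frac{nq+q-p-1}{q-1}=np+p-p^{2}+1$, which equals $(n-1)p+1$ only when $p=2$. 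Since you explicitly defer the decisive choice to be ``reverse-engineered'' from the answer, the argument as written has a genuine gap; the missing ingredient is precisely the factorization $(t-a)^{n-\frac{p}{q}}\cdot(t-a)^{\frac{p}{q}}$ (and its mirror image on $[x,b]$), after which the rest is the routine computation you describe.
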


\begin{proof}
From Lemma \ref{cet} and using the properties of modulus, we have%
\begin{eqnarray*}
&&\left\vert \int_{a}^{b}f(t)dt-\overset{n-1}{\underset{k=0}{\sum }}\left[ 
\frac{(b-x)^{k+1}+(-1)^{k}(x-a)^{k+1}}{(k+1)!}\right] f^{(k)}(x)\right\vert
\\
&\leq &\left\vert \int_{a}^{b}K_{n}(x,t)f^{(n)}(t)dt\right\vert \\
&=&\int_{a}^{x}\frac{(t-a)^{n}}{n!}\left\vert f^{(n)}(t)\right\vert
dt+\int_{x}^{b}\frac{(b-t)^{n}}{n!}\left\vert f^{(n)}(t)\right\vert dt \\
&=&\frac{1}{n!}\left\{ \int_{a}^{x}(t-a)^{n}\left\vert f^{(n)}(t)\right\vert
dt+\int_{x}^{b}(b-t)^{n}\left\vert f^{(n)}(t)\right\vert dt\right\} \\
&=&\frac{1}{n!}\left\{ \int_{a}^{x}\frac{(t-a)^{n}(t-a)^{\frac{p}{q}}}{%
(t-a)^{\frac{p}{q}}}\left\vert f^{(n)}(t)\right\vert dt+\int_{x}^{b}\frac{%
(b-t)^{n}(b-t)^{\frac{p}{q}}}{(b-t)^{\frac{p}{q}}}\left\vert
f^{(n)}(t)\right\vert dt\right\}
\end{eqnarray*}%
By H\"{o}lder inequality, we obtain%
\begin{eqnarray*}
&&\left\vert \int_{a}^{b}f(t)dt-\overset{n-1}{\underset{k=0}{\sum }}\left[ 
\frac{(b-x)^{k+1}+(-1)^{k}(x-a)^{k+1}}{(k+1)!}\right] f^{(k)}(x)\right\vert
\\
&\leq &\frac{1}{n!}\left\{ \left( \int_{a}^{x}\left[ \frac{(t-a)^{n}}{(t-a)^{%
\frac{p}{q}}}\right] ^{\frac{q}{q-1}}dt\right) ^{1-\frac{1}{q}}\left(
\int_{a}^{x}(t-a)^{p}\left\vert f^{(n)}(t)\right\vert ^{q}dt\right) ^{\frac{1%
}{q}}\right. \\
&&\text{ \ \ \ }\left. +\left( \int_{x}^{b}\left[ \frac{(b-t)^{n}}{(b-t)^{%
\frac{p}{q}}}\right] ^{\frac{q}{q-1}}dt\right) ^{1-\frac{1}{q}}\left(
\int_{x}^{b}(b-t)^{p}\left\vert f^{(n)}(t)\right\vert ^{q}dt\right) ^{\frac{1%
}{q}}\right\} .
\end{eqnarray*}%
Since $\left\vert f^{(n)}\right\vert ^{q}$ is convex on $[a,b]$ and $t=\frac{%
b-t}{b-a}a+\frac{t-a}{b-a}b,$ we have%
\begin{eqnarray*}
&&\left\vert \int_{a}^{b}f(t)dt-\overset{n-1}{\underset{k=0}{\sum }}\left[ 
\frac{(b-x)^{k+1}+(-1)^{k}(x-a)^{k+1}}{(k+1)!}\right] f^{(k)}(x)\right\vert
\\
&\leq &\frac{1}{n!}\left\{ \left( \int_{a}^{x}(t-a)^{\frac{nq-p}{q-1}%
}dt\right) ^{1-\frac{1}{q}}\left( \int_{a}^{x}(t-a)^{p}\left[ \frac{b-t}{b-a}%
\left\vert f^{(n)}\left( a\right) \right\vert ^{q}+\frac{t-a}{b-a}\left\vert
f^{(n)}\left( b\right) \right\vert ^{q}\right] dt\right) ^{\frac{1}{q}%
}\right. \\
&&\text{ \ \ }\left. +\left( \int_{x}^{b}(b-t)^{\frac{nq-p}{q-1}}dt\right)
^{1-\frac{1}{q}}\left( \int_{x}^{b}(b-t)^{p}\left[ \frac{b-t}{b-a}\left\vert
f^{(n)}\left( a\right) \right\vert ^{q}+\frac{t-a}{b-a}\left\vert
f^{(n)}\left( b\right) \right\vert ^{q}\right] dt\right) ^{\frac{1}{q}%
}\right\} \\
&=&\frac{1}{n!(b-a)^{\frac{1}{q}}(p+2)^{\frac{1}{q}}}\left( \frac{q-1}{%
nq+q-p-1}\right) ^{1-\frac{1}{q}} \\
&&\times \left\{ (x-a)^{n+1}\left[ \frac{(p+2)(b-x)+(x-a)}{(p+1)}\left\vert
f^{(n)}(a)\right\vert ^{q}+(x-a)^{p+1}\left\vert f^{(n)}(b)\right\vert ^{q}%
\right] ^{\frac{1}{q}}\right. \\
&&\text{\ \ }\left. +(b-x)^{n+1}\left[ (b-x)^{p+1}\left\vert
f^{(n)}(a)\right\vert ^{q}+\frac{(p+2)(x-a)+(b-x)}{(p+1)}\left\vert
f^{(n)}(b)\right\vert ^{q}\right] ^{\frac{1}{q}}\right\} .
\end{eqnarray*}%
By using the fact that%
\begin{eqnarray*}
\int_{a}^{x}(t-a)^{\frac{nq-p}{q-1}}dt &=&\frac{q-1}{nq+q-p-1}(x-a)^{\frac{%
nq+q-p-1}{q-1}}, \\
\int_{x}^{b}(b-t)^{\frac{nq-p}{q-1}}dt &=&\frac{q-1}{nq+q-p-1}(b-x)^{\frac{%
nq+q-p-1}{q-1}}
\end{eqnarray*}%
we get the inequality (\ref{11}), which completes the proof of the theorem.
\end{proof}

\begin{corollary}
Assume that $f$ is as in Teorem \ref{A}. If we choose $x=\frac{a+b}{2},$
then we have%
\begin{eqnarray*}
&&\left\vert \int_{a}^{b}f(t)dt-\overset{n-1}{\underset{k=0}{\sum }}\left[ 
\frac{1+(-1)^{k}}{(k+1)!}\right] \left( \frac{b-a}{2}\right)
^{k+1}f^{(k)}\left( \frac{a+b}{2}\right) \right\vert \\
&\leq &\frac{\left( b-a\right) ^{n+1}}{n!2^{n+1+\frac{1}{q}}(p+2)^{\frac{1}{q%
}}}\left( \frac{q-1}{nq+q-p-1}\right) ^{1-\frac{1}{q}} \\
&&\times \left\{ \left[ \frac{p+3}{p+1}\left\vert f^{(n)}(a)\right\vert
^{q}+\left( \frac{b-a}{2}\right) ^{p}\left\vert f^{(n)}(b)\right\vert ^{q}%
\right] ^{\frac{1}{q}}\right. \\
&&\text{ \ }\left. +\left[ \left( \frac{b-a}{2}\right) ^{p}\left\vert
f^{(n)}(a)\right\vert ^{q}+\frac{p+3}{p+1}\left\vert f^{(n)}(b)\right\vert
^{q}\right] ^{\frac{1}{q}}\right\} .
\end{eqnarray*}
\end{corollary}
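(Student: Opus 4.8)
The plan is to obtain this corollary by directly specializing Theorem \ref{A} to the midpoint $x=\frac{a+b}{2}$; no new estimation is required, only simplification of (\ref{11}). The key feature of this choice is the symmetry it produces at the two endpoints: writing $h=\frac{b-a}{2}$, one has
\[
x-a=\frac{a+b}{2}-a=h,\qquad b-x=b-\frac{a+b}{2}=h,
\]
so every occurrence of $x-a$ and of $b-x$ in (\ref{11}) collapses to the single quantity $h$. I would substitute this into both sides and then regroup the resulting powers of $h$, $(b-a)$ and $2$.

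First I would treat the left-hand side. In the summation the bracketed coefficient becomes
\[
\frac{(b-x)^{k+1}+(-1)^{k}(x-a)^{k+1}}{(k+1)!}=\frac{h^{k+1}+(-1)^{k}h^{k+1}}{(k+1)!}=\left[\frac{1+(-1)^{k}}{(k+1)!}\right]h^{k+1},
\]
and since $f^{(k)}(x)=f^{(k)}\!\left(\frac{a+b}{2}\right)$ with $h=\frac{b-a}{2}$, this reproduces exactly the summand on the stated left-hand side. (The odd-indexed terms drop out because $1+(-1)^{k}=0$ for odd $k$, which is why the midpoint formula retains only the even derivatives.)

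The substantive bookkeeping lies on the right-hand side, and this is the step I expect to be the only real source of error. Under $x-a=b-x=h$ the linear factor simplifies as $(p+2)(b-x)+(x-a)=(p+3)h$, so the first inner bracket of (\ref{11}) becomes $\frac{(p+3)h}{p+1}\,|f^{(n)}(a)|^{q}+h^{p+1}\,|f^{(n)}(b)|^{q}$. Factoring $h$ out of this bracket and recalling $h^{p}=\left(\frac{b-a}{2}\right)^{p}$ exposes precisely the inner expression appearing in the corollary, while contributing an external factor $h^{1/q}$ after the $q$-th root is taken. Combining this with the prefactor $(x-a)^{n+1}=h^{n+1}$ yields $h^{\,n+1+\frac1q}$ in front of the first square-bracketed term, and the second term is handled identically by the $a\leftrightarrow b$ symmetry.

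Finally I would collect the scalar factors. The external $h^{\,n+1+\frac1q}=\frac{(b-a)^{n+1+\frac1q}}{2^{\,n+1+\frac1q}}$ cancels against the $(b-a)^{1/q}$ in the denominator of (\ref{11}), leaving $\frac{(b-a)^{n+1}}{2^{\,n+1+\frac1q}}$ and not disturbing the factor $\frac{1}{n!(p+2)^{1/q}}\left(\frac{q-1}{nq+q-p-1}\right)^{1-\frac1q}$; this is exactly the claimed constant. Reassembling the two bracketed terms then produces the asserted inequality, completing the proof.
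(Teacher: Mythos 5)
Your proposal is correct and is exactly the intended derivation: the paper states this corollary without proof as an immediate specialization of Theorem \ref{A}, and your substitution $x-a=b-x=\frac{b-a}{2}$, the simplification $(p+2)(b-x)+(x-a)=(p+3)h$, and the bookkeeping that turns $h^{n+1+\frac{1}{q}}/(b-a)^{\frac{1}{q}}$ into $(b-a)^{n+1}/2^{\,n+1+\frac{1}{q}}$ all check out against the stated constant.
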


\begin{corollary}
With the above assumptions, if we choose $x=a$ and $x=b,$ respectively$,$ we
have%
\begin{eqnarray}
&&\left\vert \int_{a}^{b}f(t)dt-\overset{n-1}{\underset{k=0}{\sum }}\frac{%
(b-a)^{k+1}}{(k+1)!}f^{(k)}\left( a\right) \right\vert   \label{12} \\
&\leq &\frac{(b-a)^{n+1}}{n!(p+2)^{\frac{1}{q}}}\left( \frac{q-1}{nq+q-p-1}%
\right) ^{1-\frac{1}{q}}  \notag \\
&&\times \left[ (b-a)^{p}\left\vert f^{(n)}(a)\right\vert ^{q}+\frac{1}{p+1}%
\left\vert f^{(n)}(b)\right\vert ^{q}\right] ^{\frac{1}{q}}  \notag
\end{eqnarray}
\end{corollary}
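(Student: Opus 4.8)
The plan is to obtain (\ref{12}) as a direct specialization of Theorem \ref{A} to the endpoint $x=a$; no new analytic ingredient is needed, since everything follows from substituting $x=a$ into the already-proved inequality (\ref{11}) and simplifying. First I would deal with the left-hand side. Setting $x=a$ makes $x-a=0$ and $b-x=b-a$, so each numerator $(b-x)^{k+1}+(-1)^{k}(x-a)^{k+1}$ collapses to $(b-a)^{k+1}$ (the term $(-1)^{k}(x-a)^{k+1}$ vanishes for every $k\geq 0$), while $f^{(k)}(x)$ becomes $f^{(k)}(a)$. Hence the left-hand side of (\ref{11}) becomes exactly $\left\vert \int_{a}^{b}f(t)\,dt-\sum_{k=0}^{n-1}\frac{(b-a)^{k+1}}{(k+1)!}f^{(k)}(a)\right\vert$, the left-hand side of (\ref{12}).

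Next I would simplify the right-hand side. The prefactor $\frac{1}{n!(b-a)^{1/q}(p+2)^{1/q}}\left(\frac{q-1}{nq+q-p-1}\right)^{1-1/q}$ is independent of $x$ and so is unchanged. Inside the braces, the first summand carries the factor $(x-a)^{n+1}=0$ and therefore drops out entirely. In the surviving second summand I would substitute $(b-x)^{n+1}=(b-a)^{n+1}$, $(b-x)^{p+1}=(b-a)^{p+1}$, and $(p+2)(x-a)+(b-x)=b-a$, so that its inner bracket reduces to $(b-a)^{p+1}\left\vert f^{(n)}(a)\right\vert^{q}+\frac{b-a}{p+1}\left\vert f^{(n)}(b)\right\vert^{q}$.

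The last step, and the only one requiring any care, is the bookkeeping of the powers of $(b-a)$. I would factor a single $(b-a)$ out of the bracket, writing it as $(b-a)\big[(b-a)^{p}\left\vert f^{(n)}(a)\right\vert^{q}+\frac{1}{p+1}\left\vert f^{(n)}(b)\right\vert^{q}\big]$; raising to the power $1/q$ then yields a factor $(b-a)^{1/q}$ that cancels the $(b-a)^{-1/q}$ sitting in the prefactor. Together with the $(b-a)^{n+1}$ from the surviving summand this produces the overall constant $\frac{(b-a)^{n+1}}{n!(p+2)^{1/q}}\left(\frac{q-1}{nq+q-p-1}\right)^{1-1/q}$ multiplying $\big[(b-a)^{p}\left\vert f^{(n)}(a)\right\vert^{q}+\frac{1}{p+1}\left\vert f^{(n)}(b)\right\vert^{q}\big]^{1/q}$, which is precisely (\ref{12}).

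The companion inequality for $x=b$ follows in exactly the same manner by symmetry, with the two endpoints interchanged: there the second summand vanishes, the surviving first bracket becomes $\frac{b-a}{p+1}\left\vert f^{(n)}(a)\right\vert^{q}+(b-a)^{p+1}\left\vert f^{(n)}(b)\right\vert^{q}$, and the left-hand sum collapses to $\sum_{k=0}^{n-1}\frac{(-1)^{k}(b-a)^{k+1}}{(k+1)!}f^{(k)}(b)$. Since the whole argument is a pure substitution into (\ref{11}), I anticipate no genuine obstacle; the only thing to watch is keeping the exponents of $(b-a)$ straight so that the cancellation against the $(b-a)^{-1/q}$ factor is carried out correctly.
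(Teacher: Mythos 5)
Your substitution is carried out correctly: with $x=a$ the first brace term vanishes, the surviving bracket factors as $(b-a)\bigl[(b-a)^{p}\vert f^{(n)}(a)\vert^{q}+\tfrac{1}{p+1}\vert f^{(n)}(b)\vert^{q}\bigr]$, and the resulting $(b-a)^{1/q}$ cancels the $(b-a)^{-1/q}$ in the prefactor, giving (\ref{12}) exactly (and symmetrically (\ref{13}) for $x=b$). This is precisely the route the paper intends — the corollary is stated there without proof as a direct specialization of Theorem \ref{A} — so your argument matches the paper's approach.
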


\begin{eqnarray}
&&\left\vert \int_{a}^{b}f(t)dt-\overset{n-1}{\underset{k=0}{\sum }}\frac{%
(-1)^{k}(b-a)^{k+1}}{(k+1)!}f^{(k)}\left( b\right) \right\vert   \label{13}
\\
&\leq &\frac{(b-a)^{n+1}}{n!(p+2)^{\frac{1}{q}}}\left( \frac{q-1}{nq+q-p-1}%
\right) ^{1-\frac{1}{q}}  \notag \\
&&\times \left[ \frac{1}{p+1}\left\vert f^{(n)}(a)\right\vert
^{q}+(b-a)^{p}\left\vert f^{(n)}(b)\right\vert ^{q}\right] ^{\frac{1}{q}}. 
\notag
\end{eqnarray}

\begin{corollary}
Let the conditions of Theorem \ref{A} hold. Then the following result is
valid. Namely,%
\begin{eqnarray}
&&\left\vert \int_{a}^{b}f(t)dt-\overset{n-1}{\underset{k=0}{\sum }}\frac{%
(b-a)^{k+1}}{(k+1)!}\left[ \frac{f^{(k)}\left( a\right)
+(-1)^{k}f^{(k)}\left( b\right) }{2}\right] \right\vert   \label{14} \\
&\leq &\frac{(b-a)^{n+1}}{n!(p+2)^{\frac{1}{q}}}\left( \frac{q-1}{nq+q-p-1}%
\right) ^{1-\frac{1}{q}}  \notag \\
&&\times \left\{ \left[ (b-a)^{p}\left\vert f^{(n)}(a)\right\vert ^{q}+\frac{%
1}{p+1}\left\vert f^{(n)}(b)\right\vert ^{q}\right] ^{\frac{1}{q}}\right.  
\notag \\
&&\text{ \ \ }\left. +\left[ \frac{1}{p+1}\left\vert f^{(n)}(a)\right\vert
^{q}+(b-a)^{p}\left\vert f^{(n)}(b)\right\vert ^{q}\right] ^{\frac{1}{q}%
}\right\} .  \notag
\end{eqnarray}
\end{corollary}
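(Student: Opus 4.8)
The plan is to obtain (\ref{14}) as an immediate consequence of the two endpoint estimates (\ref{12}) and (\ref{13}), in exactly the same way that Corollary (\ref{6}) was deduced from (\ref{4})--(\ref{5}) and Corollary (\ref{10}) from (\ref{8})--(\ref{9}). No new integration or convexity estimate is required; the entire content is an averaging identity followed by the triangle inequality.

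First I would record the elementary identity obtained by halving the sum of the two bracketed quantities on the left of (\ref{12}) and (\ref{13}). Abbreviating
\[
A=\int_{a}^{b}f(t)dt-\overset{n-1}{\underset{k=0}{\sum}}\frac{(b-a)^{k+1}}{(k+1)!}f^{(k)}(a),\qquad B=\int_{a}^{b}f(t)dt-\overset{n-1}{\underset{k=0}{\sum}}\frac{(-1)^{k}(b-a)^{k+1}}{(k+1)!}f^{(k)}(b),
\]
a termwise recombination of the two finite sums yields
\[
\frac{A+B}{2}=\int_{a}^{b}f(t)dt-\overset{n-1}{\underset{k=0}{\sum}}\frac{(b-a)^{k+1}}{(k+1)!}\left[\frac{f^{(k)}(a)+(-1)^{k}f^{(k)}(b)}{2}\right],
\]
whose modulus is precisely the left-hand side of (\ref{14}).

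Next I would apply the triangle inequality $\left|\frac{A+B}{2}\right|\le\frac{1}{2}\left(|A|+|B|\right)$ and substitute the bounds (\ref{12}) for $|A|$ and (\ref{13}) for $|B|$. Since both right-hand sides carry the identical prefactor $\frac{(b-a)^{n+1}}{n!(p+2)^{1/q}}\left(\frac{q-1}{nq+q-p-1}\right)^{1-1/q}$, the two $[\,\cdots\,]^{1/q}$ terms combine into the braced sum on the right of (\ref{14}), carrying the extra factor $\frac{1}{2}$ furnished by the triangle inequality. There is no analytic obstacle: the only thing to verify is the bookkeeping in the averaging identity, namely that combining the $f^{(k)}(a)$-sum with the $(-1)^{k}f^{(k)}(b)$-sum reproduces the symmetrized coefficient term by term. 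The one point I would flag is this constant: the triangle inequality actually yields the sharper estimate with the factor $\frac{1}{2}$ present (as in Corollary (\ref{6})), and since each bracket is nonnegative, the inequality stated in (\ref{14}) without that factor holds \emph{a fortiori}.
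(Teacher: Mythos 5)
Your proof is correct and follows essentially the same route as the paper, which likewise obtains (\ref{14}) by summing (\ref{12}) and (\ref{13}) and applying the triangle inequality. Your additional observation is well taken: the averaging argument actually produces the bound with an extra factor $\frac{1}{2}$ (as retained in (\ref{6}) and (\ref{18})), so the inequality as stated in (\ref{14}) holds \emph{a fortiori} but is weaker by a factor of $2$ than what the proof delivers.
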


\begin{proof}
Summing the inequalities (\ref{12}) and (\ref{13}) and by using the triangle
inequality, we have the inequality (\ref{14}).
\end{proof}

\begin{corollary}
In the inequalities (\ref{11}), if we choose $n=1,$ then we have%
\begin{eqnarray*}
&&\left\vert f(x)-\frac{1}{b-a}\int_{a}^{b}f(t)dt\right\vert \\
&\leq &\frac{1}{(b-a)^{\frac{1}{q}}(p+2)^{\frac{1}{q}}}\left( \frac{q-1}{%
2q-p-1}\right) ^{1-\frac{1}{q}} \\
&&\times \left\{ (x-a)^{2}\left[ \frac{(p+2)(b-x)+(x-a)}{(p+1)}\left\vert
f^{\prime }(a)\right\vert ^{q}+(x-a)^{p+1}\left\vert f^{\prime
}(b)\right\vert ^{q}\right] ^{\frac{1}{q}}\right. \\
&&\text{ \ \ }\left. +(b-x)^{2}\left[ (b-x)^{p+1}\left\vert f^{\prime
}(a)\right\vert ^{q}+\frac{(p+2)(x-a)+(b-x)}{(p+1)}\left\vert f^{\prime
}(b)\right\vert ^{q}\right] ^{\frac{1}{q}}\right\} .
\end{eqnarray*}
\end{corollary}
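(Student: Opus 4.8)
The plan is to obtain this corollary as a pure specialization of Theorem \ref{A} at $n=1$, with no fresh analytic input: the convexity hypothesis and the H\"older estimate have already been discharged in the proof of (\ref{11}), so only substitution and one algebraic rescaling remain. First I would set $n=1$ in (\ref{11}) and simplify the summation on the left-hand side. Since $k$ ranges over $\{0,\dots,n-1\}=\{0\}$, only the $k=0$ term survives, and
\[
\frac{(b-x)^{1}+(-1)^{0}(x-a)^{1}}{1!}\,f^{(0)}(x)=\bigl[(b-x)+(x-a)\bigr]f(x)=(b-a)f(x).
\]
Hence the left-hand side of (\ref{11}) collapses to $\left\vert \int_{a}^{b}f(t)\,dt-(b-a)f(x)\right\vert$.

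Next I would reduce the constants on the right-hand side under $n=1$. Here $n!=1$, the exponent $nq+q-p-1$ becomes $2q-p-1$, the powers $(x-a)^{n+1}$ and $(b-x)^{n+1}$ become $(x-a)^{2}$ and $(b-x)^{2}$, and each $f^{(n)}$ is replaced by $f'$. Propagating these through the two bracketed convex-combination terms reproduces verbatim the two bracketed expressions shown in the statement, so one immediately has a bound for $\left\vert \int_{a}^{b}f(t)\,dt-(b-a)f(x)\right\vert$ with the factor $\left(\tfrac{q-1}{2q-p-1}\right)^{1-\frac1q}$ and the prefactor $\tfrac{1}{(p+2)^{1/q}}$ together with a power of $(b-a)$.

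Finally I would pass to the averaged Ostrowski form by dividing through by $(b-a)$, using the elementary identity
\[
\left\vert f(x)-\frac{1}{b-a}\int_{a}^{b}f(t)\,dt\right\vert=\frac{1}{b-a}\left\vert \int_{a}^{b}f(t)\,dt-(b-a)f(x)\right\vert ,
\]
valid because $b-a>0$. No convexity or H\"older step is redone, since those are inherited from Theorem \ref{A}; the whole argument is a specialization together with this single rescaling. The one point demanding care---indeed the only real obstacle---is the bookkeeping of the powers of $(b-a)$: the division supplies an additional factor $(b-a)^{-1}$, which must be combined with the $(b-a)^{-1/q}$ already present when reconciling the displayed normalization, exactly as in the passage from Theorem \ref{tin} to Corollary \ref{AA}.
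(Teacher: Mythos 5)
Your route is the only one available and coincides with what the paper tacitly does (this corollary is stated without proof): set $n=1$ in (\ref{11}), observe that the sum collapses to the single term $\left[(b-x)+(x-a)\right]f(x)=(b-a)f(x)$, substitute $n!=1$, $nq+q-p-1=2q-p-1$, $(x-a)^{n+1}=(x-a)^{2}$, $(b-x)^{n+1}=(b-x)^{2}$, and divide through by $b-a$. All of those substitutions in your write-up are correct. However, the one point you flag as ``demanding care'' --- the bookkeeping of the powers of $b-a$ --- is not merely a point of care but an actual discrepancy with the displayed statement: carrying out the division honestly turns the prefactor $\frac{1}{(b-a)^{\frac{1}{q}}(p+2)^{\frac{1}{q}}}$ of (\ref{11}) into $\frac{1}{(b-a)^{1+\frac{1}{q}}(p+2)^{\frac{1}{q}}}$, exactly parallel to the factor $\frac{1}{(b-a)^{1+\frac{1}{q}}}$ in the $n=1$ corollary of Theorem \ref{yill} and the factor $\frac{1}{(b-a)^{2}}$ in Corollary \ref{AA}, whereas the corollary as printed retains only $(b-a)^{-\frac{1}{q}}$. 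So your derivation is sound and in fact exposes a typo: the statement is off by a factor of $b-a$. Rather than leaving the ``reconciliation'' implicit, you should conclude explicitly with the exponent $1+\frac{1}{q}$ on $b-a$; as written, your proof does not actually establish the inequality displayed in the statement (and for $b-a>1$ that displayed inequality is the weaker, though still true, one).
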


\begin{theorem}
\label{B}For $n\geq 1,$ let $f:[a,b]\rightarrow 
\mathbb{R}
$ be $n-$time differentiable mapping and $a<b.$ If $f^{(n)}\in L[a,b]$ and $%
\left\vert f^{(n)}\right\vert ^{q}$ is convex on $[a,b]$ and $q\geq 1,$ then
we have the following inequality: 
\begin{eqnarray}
&&  \label{15} \\
&&\left\vert \int_{a}^{b}f(t)dt-\overset{n-1}{\underset{k=0}{\sum }}\left[ 
\frac{(b-x)^{k+1}+(-1)^{k}(x-a)^{k+1}}{(k+1)!}\right] f^{(k)}(x)\right\vert 
\notag \\
&\leq &\frac{1}{(n+1)!(b-a)^{\frac{1}{q}}(n+2)^{\frac{1}{q}}}  \notag \\
&&\times \left\{ (x-a)^{n+1}\left[ \left[ (n+2)(b-x)+(x-a)\right] \left\vert
f^{(n)}(a)\right\vert ^{q}+(n+1)(x-a)\left\vert f^{(n)}(b)\right\vert ^{q}%
\right] ^{\frac{1}{q}}\right.  \notag \\
&&\text{ \ \ }\left. +(b-x)^{n+1}\left[ (n+1)(b-x)\left\vert
f^{(n)}(a)\right\vert ^{q}+\left[ (n+2)(x-a)+(b-x)\right] \left\vert
f^{(n)}(b)\right\vert ^{q}\right] ^{\frac{1}{q}}\right\} .  \notag
\end{eqnarray}
\end{theorem}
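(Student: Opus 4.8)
The plan is to follow the same skeleton as the proof of Theorem \ref{tin}, but to replace the direct convexity estimate with the \emph{power-mean} (weighted Jensen) inequality, which is precisely the tool adapted to the hypotheses $q\geq 1$ and convexity of $\left\vert f^{(n)}\right\vert ^{q}$. First I would invoke Lemma \ref{cet} together with the triangle inequality to reduce the left-hand side of (\ref{15}) to
$$\frac{1}{n!}\left\{ \int_{a}^{x}(t-a)^{n}\left\vert f^{(n)}(t)\right\vert dt+\int_{x}^{b}(b-t)^{n}\left\vert f^{(n)}(t)\right\vert dt\right\},$$
exactly as in the opening lines of the earlier proofs. The two summands are handled symmetrically, so I describe the first.

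Next I would apply the power-mean inequality to the first integral with weight $(t-a)^{n}$: for $q\geq 1$,
$$\int_{a}^{x}(t-a)^{n}\left\vert f^{(n)}(t)\right\vert dt\leq \left( \int_{a}^{x}(t-a)^{n}dt\right) ^{1-\frac{1}{q}}\left( \int_{a}^{x}(t-a)^{n}\left\vert f^{(n)}(t)\right\vert ^{q}dt\right) ^{\frac{1}{q}}.$$
The first factor equals $\left( (x-a)^{n+1}/(n+1)\right) ^{1-1/q}$. Into the second factor I would insert the convexity bound $\left\vert f^{(n)}(t)\right\vert ^{q}\leq \frac{b-t}{b-a}\left\vert f^{(n)}(a)\right\vert ^{q}+\frac{t-a}{b-a}\left\vert f^{(n)}(b)\right\vert ^{q}$ and then use the two elementary integrals already recorded in the proof of Theorem \ref{tin}, namely $\int_{a}^{x}(t-a)^{n}(b-t)dt=\frac{(x-a)^{n+1}[(n+2)(b-x)+(x-a)]}{(n+1)(n+2)}$ and $\int_{a}^{x}(t-a)^{n+1}dt=\frac{(x-a)^{n+2}}{(n+2)}$. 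Factoring out $\frac{(x-a)^{n+1}}{(b-a)(n+1)(n+2)}$ rewrites the second factor as $\left( \frac{(x-a)^{n+1}}{(b-a)(n+1)(n+2)}\right) ^{1/q}$ times $\left[ [(n+2)(b-x)+(x-a)]\left\vert f^{(n)}(a)\right\vert ^{q}+(n+1)(x-a)\left\vert f^{(n)}(b)\right\vert ^{q}\right] ^{1/q}$.

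The remaining work is bookkeeping of exponents. The powers of $(x-a)$ combine as $(x-a)^{(n+1)(1-1/q)}\cdot(x-a)^{(n+1)/q}=(x-a)^{n+1}$; the powers of $(n+1)$ collapse via $(n+1)^{-(1-1/q)}(n+1)^{-1/q}=(n+1)^{-1}$, which together with the prefactor $1/n!$ produces $1/(n+1)!$; and the surviving constants are $(b-a)^{-1/q}(n+2)^{-1/q}$. This reproduces the first bracketed term of (\ref{15}). The analogous treatment of $\int_{x}^{b}$, using $\int_{x}^{b}(b-t)^{n+1}dt=\frac{(b-x)^{n+2}}{(n+2)}$ and $\int_{x}^{b}(b-t)^{n}(t-a)dt=\frac{(b-x)^{n+1}[(n+2)(x-a)+(b-x)]}{(n+1)(n+2)}$, yields the second bracketed term by symmetry, and summing the two completes the proof. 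I do not expect any genuine obstacle; the only point requiring care is the exponent arithmetic in the last step, where the separate powers of $(x-a)$ and of $(n+1)$ coming from the two power-mean factors must be merged correctly so that the prefactor $1/n!$ upgrades to $1/(n+1)!$.
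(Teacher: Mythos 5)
Your proposal is correct and follows essentially the same route as the paper: reduce via Lemma \ref{cet}, apply the power-mean inequality with weights $(t-a)^{n}$ and $(b-t)^{n}$, insert the convexity bound on $\left\vert f^{(n)}\right\vert ^{q}$, and evaluate the same elementary integrals; your exponent bookkeeping (merging $(x-a)^{(n+1)(1-1/q)}$ with $(x-a)^{(n+1)/q}$ and upgrading $1/n!$ to $1/(n+1)!$) checks out. No gaps.
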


\begin{proof}
From Lemma \ref{cet} and using the properties of modulus, we obtain%
\begin{eqnarray*}
&&\left\vert \int_{a}^{b}f(t)dt-\overset{n-1}{\underset{k=0}{\sum }}\left[ 
\frac{(b-x)^{k+1}+(-1)^{k}(x-a)^{k+1}}{(k+1)!}\right] f^{(k)}(x)\right\vert
\\
&\leq &\left\vert \int_{a}^{b}K_{n}(x,t)f^{(n)}(t)dt\right\vert \\
&=&\frac{1}{n!}\left\{ \int_{a}^{x}(t-a)^{n}\left\vert f^{(n)}(t)\right\vert
dt+\int_{x}^{b}(b-t)^{n}\left\vert f^{(n)}(t)\right\vert dt\right\} .
\end{eqnarray*}%
By Power-mean inequality, we obtain%
\begin{eqnarray*}
&&\left\vert \int_{a}^{b}f(t)dt-\overset{n-1}{\underset{k=0}{\sum }}\left[ 
\frac{(b-x)^{k+1}+(-1)^{k}(x-a)^{k+1}}{(k+1)!}\right] f^{(k)}(x)\right\vert
\\
&\leq &\frac{1}{n!}\left\{ \left( \int_{a}^{x}(t-a)^{n}dt\right) ^{1-\frac{1%
}{q}}\left( \int_{a}^{x}(t-a)^{n}\left\vert f^{(n)}(t)\right\vert
^{q}dt\right) ^{\frac{1}{q}}\right. \\
&&\text{ \ \ \ }\left. +\left( \int_{x}^{b}(b-t)^{n}dt\right) ^{1-\frac{1}{q}%
}\left( \int_{x}^{b}(b-t)^{n}\left\vert f^{(n)}(t)\right\vert ^{q}dt\right)
^{\frac{1}{q}}\right\} .
\end{eqnarray*}%
Since $\left\vert f^{(n)}\right\vert ^{q}$ is convex on $[a,b]$ and $t=\frac{%
b-t}{b-a}a+\frac{t-a}{b-a}b,$ we have%
\begin{eqnarray*}
&&\left\vert \int_{a}^{b}f(t)dt-\overset{n-1}{\underset{k=0}{\sum }}\left[ 
\frac{(b-x)^{k+1}+(-1)^{k}(x-a)^{k+1}}{(k+1)!}\right] f^{(k)}(x)\right\vert
\\
&\leq &\frac{1}{n!}\left\{ \left( \frac{(x-a)^{n+1}}{n+1}\right) ^{1-\frac{1%
}{q}}\left( \int_{a}^{x}(t-a)^{n}\left[ \frac{b-t}{b-a}\left\vert
f^{(n)}\left( a\right) \right\vert ^{q}+\frac{t-a}{b-a}\left\vert
f^{(n)}\left( b\right) \right\vert ^{q}\right] dt\right) ^{\frac{1}{q}%
}\right. \\
&&\text{ \ \ }\left. +\left( \frac{(b-x)^{n+1}}{n+1}\right) ^{1-\frac{1}{q}%
}\left( \int_{x}^{b}(b-t)^{n}\left[ \frac{b-t}{b-a}\left\vert f^{(n)}\left(
a\right) \right\vert ^{q}+\frac{t-a}{b-a}\left\vert f^{(n)}\left( b\right)
\right\vert ^{q}\right] dt\right) ^{\frac{1}{q}}\right\} \\
&=&\frac{1}{(n+1)!(b-a)^{\frac{1}{q}}(n+2)^{\frac{1}{q}}} \\
&&\times \left\{ (x-a)^{n+1}\left[ \left[ (n+2)(b-x)+(x-a)\right] \left\vert
f^{(n)}(a)\right\vert ^{q}+(n+1)(x-a)\left\vert f^{(n)}(b)\right\vert ^{q}%
\right] ^{\frac{1}{q}}\right. \\
&&\text{\ }\left. +(b-x)^{n+1}\left[ (n+1)(b-x)\left\vert
f^{(n)}(a)\right\vert ^{q}+\left[ (n+2)(x-a)+(b-x)\right] \left\vert
f^{(n)}(b)\right\vert ^{q}\right] ^{\frac{1}{q}}\right\} .
\end{eqnarray*}%
Hence the proof of the theorem is completed.
\end{proof}

\begin{corollary}
With the above assumptions, if we choose $x=\frac{a+b}{2},$ then we have%
\begin{eqnarray*}
&&\left\vert \int_{a}^{b}f(t)dt-\overset{n-1}{\underset{k=0}{\sum }}\left[ 
\frac{1+(-1)^{k}}{(k+1)!}\right] \left( \frac{b-a}{2}\right)
^{k+1}f^{(k)}\left( \frac{a+b}{2}\right) \right\vert \\
&\leq &\frac{\left( b-a\right) ^{n+1}}{(n+1)!2^{n+1+\frac{1}{q}}(n+2)^{\frac{%
1}{q}}} \\
&&\times \left\{ \left[ (n+3)\left\vert f^{(n)}(a)\right\vert
^{q}+(n+1)\left\vert f^{(n)}(b)\right\vert ^{q}\right] ^{\frac{1}{q}}\right.
\\
&&\text{ \ }\left. +\left[ (n+1)\left\vert f^{(n)}(a)\right\vert
^{q}+(n+3)\left\vert f^{(n)}(b)\right\vert ^{q}\right] ^{\frac{1}{q}%
}\right\} .
\end{eqnarray*}
\end{corollary}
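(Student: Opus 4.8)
The plan is to specialize inequality (\ref{15}) of Theorem \ref{B} to the midpoint $x=\frac{a+b}{2}$ and then carry out the resulting algebraic simplification; no new analytic input is required, since Theorem \ref{B} already supplies the estimate at an arbitrary interior point and the corollary is merely its evaluation at one particular point.

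First I would record the two elementary identities that make the midpoint special: with $x=\frac{a+b}{2}$ one has
\[
x-a=b-x=\frac{b-a}{2}.
\]
Substituting these into the summand on the left-hand side, the coefficient $\dfrac{(b-x)^{k+1}+(-1)^{k}(x-a)^{k+1}}{(k+1)!}$ collapses to $\dfrac{1+(-1)^{k}}{(k+1)!}\left(\dfrac{b-a}{2}\right)^{k+1}$, which is exactly the sum appearing in the claimed inequality. This reproduces the left-hand side verbatim.

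Next I would simplify the two bracketed expressions on the right of (\ref{15}). Using $x-a=b-x=\frac{b-a}{2}$, the linear combinations reduce to
\[
(n+2)(b-x)+(x-a)=(n+3)\frac{b-a}{2},\qquad (n+2)(x-a)+(b-x)=(n+3)\frac{b-a}{2},
\]
while $(n+1)(x-a)$ and $(n+1)(b-x)$ both equal $(n+1)\frac{b-a}{2}$. Pulling the common factor $\frac{b-a}{2}$ out of each bracket contributes a factor $\left(\frac{b-a}{2}\right)^{1/q}$ after taking the $q$-th root, and leaves precisely the two coefficient brackets $\left[(n+3)|f^{(n)}(a)|^{q}+(n+1)|f^{(n)}(b)|^{q}\right]^{1/q}$ and $\left[(n+1)|f^{(n)}(a)|^{q}+(n+3)|f^{(n)}(b)|^{q}\right]^{1/q}$ that occur in the statement. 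Likewise each prefactor $(x-a)^{n+1}=(b-x)^{n+1}$ becomes $\left(\frac{b-a}{2}\right)^{n+1}$.

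Finally I would collect all powers of $\frac{b-a}{2}$. The constant in front of (\ref{15}) is $\dfrac{1}{(n+1)!(b-a)^{1/q}(n+2)^{1/q}}$; multiplying it by the extracted factors $\left(\frac{b-a}{2}\right)^{n+1}\left(\frac{b-a}{2}\right)^{1/q}=\dfrac{(b-a)^{n+1+1/q}}{2^{\,n+1+1/q}}$ and cancelling the $(b-a)^{1/q}$ in the denominator yields exactly $\dfrac{(b-a)^{n+1}}{(n+1)!\,2^{\,n+1+1/q}(n+2)^{1/q}}$, the prefactor in the statement. The only point demanding care is this exponent bookkeeping for the integer power $(b-a)^{n+1}$ together with the fractional $\frac{1}{q}$ exponents; since that is entirely routine, the main ``obstacle'' is purely clerical, and the corollary follows at once.
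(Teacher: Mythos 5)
Your proposal is correct and is exactly the argument the paper intends (the paper gives no separate proof for this corollary, treating it as the direct substitution $x=\frac{a+b}{2}$ in inequality (\ref{15}) of Theorem \ref{B}). Your exponent bookkeeping checks out: the factor $\left(\frac{b-a}{2}\right)^{n+1+\frac{1}{q}}$ against the $(b-a)^{\frac{1}{q}}$ in the denominator does yield the stated prefactor $\frac{(b-a)^{n+1}}{(n+1)!\,2^{n+1+\frac{1}{q}}(n+2)^{\frac{1}{q}}}$, and the brackets $(n+3)$, $(n+1)$ come out as you computed.
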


\begin{corollary}
In Theorem \ref{B}, if we choose $x=a$ and $x=b,$ respectively$,$ we have%
\begin{eqnarray}
&&\left\vert \int_{a}^{b}f(t)dt-\overset{n-1}{\underset{k=0}{\sum }}\frac{%
(b-a)^{k+1}}{(k+1)!}f^{(k)}\left( a\right) \right\vert   \label{16} \\
&\leq &\frac{(b-a)^{n+1}}{(n+1)!(n+2)^{\frac{1}{q}}}\left[ (n+1)\left\vert
f^{(n)}(a)\right\vert ^{q}+\left\vert f^{(n)}(b)\right\vert ^{q}\right] ^{%
\frac{1}{q}}  \notag
\end{eqnarray}
\end{corollary}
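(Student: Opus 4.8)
The plan is to obtain the inequality (\ref{16}) as a direct specialization of Theorem \ref{B}, setting $x = a$ in the general bound (\ref{15}). First I would substitute $x = a$ into the left-hand side: since $(x-a)^{k+1} = 0$ for every $k \geq 0$, each summand $\frac{(b-x)^{k+1} + (-1)^k (x-a)^{k+1}}{(k+1)!} f^{(k)}(x)$ collapses to $\frac{(b-a)^{k+1}}{(k+1)!} f^{(k)}(a)$, so the sum becomes exactly $\sum_{k=0}^{n-1} \frac{(b-a)^{k+1}}{(k+1)!} f^{(k)}(a)$, which is the expression appearing inside the absolute value in (\ref{16}).

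Next I would simplify the right-hand side of (\ref{15}) under $x = a$. The factor $(x-a)^{n+1}$ multiplying the first bracket vanishes, so the entire first term inside the braces drops out. In the surviving second term I would use $(b-x) = (b-a)$, whence $(n+1)(b-x) = (n+1)(b-a)$ and $(n+2)(x-a) + (b-x) = (b-a)$. This leaves the bound $\frac{(b-a)^{n+1}}{(n+1)!\,(b-a)^{1/q}(n+2)^{1/q}} \left[ (n+1)(b-a)\,|f^{(n)}(a)|^q + (b-a)\,|f^{(n)}(b)|^q \right]^{1/q}$.

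Finally I would factor $(b-a)$ out of the square bracket as $(b-a)^{1/q}$ and cancel it against the $(b-a)^{1/q}$ in the denominator, producing $\frac{(b-a)^{n+1}}{(n+1)!\,(n+2)^{1/q}} \left[ (n+1)\,|f^{(n)}(a)|^q + |f^{(n)}(b)|^q \right]^{1/q}$, which is precisely the claimed right-hand side of (\ref{16}). The companion inequality for $x = b$ then follows by the symmetric computation, in which instead $(b-x)^{n+1} = 0$ eliminates the second term and $(x-a) = (b-a)$ is used in the first. Since every step is a routine algebraic substitution, there is no genuine obstacle here; the only point demanding care is tracking the cancellation of the $(b-a)^{1/q}$ factor so that the exponent of $(b-a)$ in the final bound correctly reduces to $n+1$.
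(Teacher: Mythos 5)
Your proposal is correct and is exactly the intended argument: the paper offers no separate proof for this corollary, treating it as the direct substitution $x=a$ (and $x=b$) into inequality (\ref{15}) of Theorem \ref{B}, which is precisely what you carry out. Your bookkeeping of the vanishing term, the reduction of the bracket to $(b-a)\bigl[(n+1)|f^{(n)}(a)|^{q}+|f^{(n)}(b)|^{q}\bigr]$, and the cancellation of the $(b-a)^{1/q}$ factor are all accurate.
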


\begin{eqnarray}
&&\left\vert \int_{a}^{b}f(t)dt-\overset{n-1}{\underset{k=0}{\sum }}\frac{%
(-1)^{k}(b-a)^{k+1}}{(k+1)!}f^{(k)}\left( b\right) \right\vert   \label{17}
\\
&\leq &\frac{(b-a)^{n+1}}{(n+1)!(n+2)^{\frac{1}{q}}}\left[ \left\vert
f^{(n)}(a)\right\vert ^{q}+(n+1)\left\vert f^{(n)}(b)\right\vert ^{q}\right]
^{\frac{1}{q}}.  \notag
\end{eqnarray}

\begin{corollary}
Let the conditions of Theorem \ref{B} hold. Then the following result is
valid:%
\begin{eqnarray}
&&\left\vert \int_{a}^{b}f(t)dt-\overset{n-1}{\underset{k=0}{\sum }}\frac{%
(b-a)^{k+1}}{(k+1)!}\left[ \frac{f^{(k)}\left( a\right)
+(-1)^{k}f^{(k)}\left( b\right) }{2}\right] \right\vert   \label{18} \\
&\leq &\frac{(b-a)^{n+1}}{2(n+1)!(n+2)^{\frac{1}{q}}}  \notag \\
&&\times \left\{ \left[ (n+1)\left\vert f^{(n)}(a)\right\vert
^{q}+\left\vert f^{(n)}(b)\right\vert ^{q}\right] ^{\frac{1}{q}}\right.  
\notag \\
&&\text{\ \ }\left. +\left[ \left\vert f^{(n)}(a)\right\vert
^{q}+(n+1)\left\vert f^{(n)}(b)\right\vert ^{q}\right] ^{\frac{1}{q}%
}\right\} .  \notag
\end{eqnarray}
\end{corollary}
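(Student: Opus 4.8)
The plan is to follow exactly the template used for the corollaries giving \eqref{6} and \eqref{10}: inequality \eqref{18} is nothing but the average of the two endpoint estimates \eqref{16} and \eqref{17}, combined through the triangle inequality. To organize the computation I would first abbreviate $J=\int_{a}^{b}f(t)\,dt$ and introduce the two truncated Taylor-type sums
\[
S_{a}=\overset{n-1}{\underset{k=0}{\sum }}\frac{(b-a)^{k+1}}{(k+1)!}f^{(k)}(a),
\qquad
S_{b}=\overset{n-1}{\underset{k=0}{\sum }}\frac{(-1)^{k}(b-a)^{k+1}}{(k+1)!}f^{(k)}(b),
\]
so that \eqref{16} reads $\left\vert J-S_{a}\right\vert \leq A$ and \eqref{17} reads $\left\vert J-S_{b}\right\vert \leq B$, where $A$ and $B$ denote the two right-hand sides appearing there.

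The key elementary observation is that the quantity inside the absolute value on the left of \eqref{18} is exactly $J-\tfrac{1}{2}(S_{a}+S_{b})$, because the summand $\frac{(b-a)^{k+1}}{(k+1)!}\bigl[\frac{f^{(k)}(a)+(-1)^{k}f^{(k)}(b)}{2}\bigr]$ splits term-by-term into one half of the $S_{a}$-summand plus one half of the $S_{b}$-summand. I would then write the linear decomposition
\[
J-\tfrac{1}{2}(S_{a}+S_{b})=\tfrac{1}{2}\left( J-S_{a}\right) +\tfrac{1}{2}\left( J-S_{b}\right)
\]
and apply the triangle inequality together with \eqref{16} and \eqref{17}:
\[
\left\vert J-\tfrac{1}{2}(S_{a}+S_{b})\right\vert
\leq \tfrac{1}{2}\left\vert J-S_{a}\right\vert +\tfrac{1}{2}\left\vert J-S_{b}\right\vert
\leq \tfrac{1}{2}\left( A+B\right) .
\]
Substituting the explicit forms of $A$ and $B$ and factoring out the common constant $\frac{(b-a)^{n+1}}{(n+1)!(n+2)^{1/q}}$ produces the prefactor $\frac{(b-a)^{n+1}}{2(n+1)!(n+2)^{1/q}}$ together with the sum of the two bracketed $\tfrac{1}{q}$-power terms, which is precisely the right-hand side of \eqref{18}.

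There is no genuine obstacle here; the argument is a routine averaging of \eqref{16} and \eqref{17}. The one point worth flagging is that, in contrast to the analogous corollaries \eqref{6} and \eqref{10}, the two bracketed expressions $\bigl[(n+1)\left\vert f^{(n)}(a)\right\vert ^{q}+\left\vert f^{(n)}(b)\right\vert ^{q}\bigr]^{1/q}$ and $\bigl[\left\vert f^{(n)}(a)\right\vert ^{q}+(n+1)\left\vert f^{(n)}(b)\right\vert ^{q}\bigr]^{1/q}$ do not collapse into a single closed form for general $q$, since no Minkowski-type additivity is available when $q>1$. Consequently both $\tfrac{1}{q}$-power summands must be retained, which is exactly why the stated bound keeps the two terms separate rather than merging them as in the $q=1$ case.
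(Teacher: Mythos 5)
Your proposal is correct and follows exactly the paper's own argument: the authors likewise obtain \eqref{18} by summing \eqref{16} and \eqref{17} and applying the triangle inequality, and your explicit decomposition $J-\tfrac{1}{2}(S_{a}+S_{b})=\tfrac{1}{2}(J-S_{a})+\tfrac{1}{2}(J-S_{b})$ is just the detailed form of that one-line proof. Your closing remark about why the two $\tfrac{1}{q}$-power terms cannot be merged for $q>1$ is accurate but not needed for the proof itself.
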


\begin{proof}
Summing the inequalities (\ref{16}) and (\ref{17}) and by using the triangle
inequality, we have the inequality (\ref{18}).
\end{proof}

\begin{corollary}
\label{AB}In the inequalities (\ref{11}), if we choose $n=1,$ then we have%
\begin{eqnarray*}
&&\left\vert f(x)-\frac{1}{b-a}\int_{a}^{b}f(t)dt\right\vert \\
&\leq &\frac{1}{2(b-a)^{\frac{1}{q}}}\left\{ (x-a)^{2}\left[ \frac{(3b-2x-a)%
}{3}\left\vert f^{\prime }(a)\right\vert ^{q}+\frac{2(x-a)}{3}\left\vert
f^{\prime }(b)\right\vert ^{q}\right] ^{\frac{1}{q}}\right. \\
&&\text{ \ \ \ \ \ \ \ \ \ \ \ \ }\left. +(b-x)^{2}\left[ \frac{2(b-x)}{3}%
\left\vert f^{\prime }(a)\right\vert ^{q}+\frac{(b+2x-3a)}{3}\left\vert
f^{\prime }(b)\right\vert ^{q}\right] ^{\frac{1}{q}}\right\} .
\end{eqnarray*}
\end{corollary}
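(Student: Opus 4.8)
The plan is to obtain this corollary simply by specializing an earlier main result to the single value $n=1$, together with elementary algebra. The stated cross-reference to inequality (\ref{11}) appears to be a slip: the right-hand side here carries no exponent $p$ and none of the factor $\left(\frac{q-1}{nq+q-p-1}\right)^{1-\frac{1}{q}}$ characteristic of Theorem \ref{A}; instead it has precisely the power-mean shape of inequality (\ref{15}) in Theorem \ref{B}. So I would start from (\ref{15}) and set $n=1$.

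First I would simplify the left-hand side. With $n=1$ the sum $\sum_{k=0}^{n-1}$ reduces to the single term $k=0$, whose coefficient is $\frac{(b-x)^{1}+(x-a)^{1}}{1!}=b-a$, so the subtracted quantity is $(b-a)f(x)$ and the left-hand side of (\ref{15}) becomes $\left\vert\int_a^b f(t)\,dt-(b-a)f(x)\right\vert$. Using $\left\vert\int_a^b f(t)\,dt-(b-a)f(x)\right\vert=(b-a)\left\vert f(x)-\frac{1}{b-a}\int_a^b f(t)\,dt\right\vert$ then converts this into the averaged form displayed in the corollary.

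Next I would reduce the coefficients on the right. Setting $n=1$ gives $(n+1)!=2$, $(x-a)^{n+1}=(x-a)^2$, $(b-x)^{n+1}=(b-x)^2$, and the inner linear combinations simplify via $(n+2)(b-x)+(x-a)=3b-2x-a$, $(n+1)(x-a)=2(x-a)$, $(n+1)(b-x)=2(b-x)$, and $(n+2)(x-a)+(b-x)=b+2x-3a$. This reproduces the bracketed expressions up to the prefactor $(n+2)^{\frac{1}{q}}=3^{\frac{1}{q}}$. Finally I would absorb this $3^{\frac{1}{q}}$ into each bracket, using $[A]^{\frac{1}{q}}/3^{\frac{1}{q}}=[A/3]^{\frac{1}{q}}$, which turns each coefficient into its displayed form divided by $3$ and leaves the clean prefactor $\frac{1}{2(b-a)^{\frac{1}{q}}}$.

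I do not expect any genuine obstacle: the entire argument is substitution followed by routine simplification, so no new estimate is required. The only points needing care are bookkeeping ones, namely correctly collapsing the sum, handling the $\frac{1}{b-a}$ normalization so the left-hand side appears in averaged form, and distributing the stray $3^{\frac{1}{q}}$ from the power-mean prefactor correctly across the two brackets. If one instead reads the corollary's left-hand side literally as the averaged quantity, consistency with (\ref{15}) demands an extra factor $\frac{1}{b-a}$ in the prefactor; I would flag this as a normalization to reconcile rather than a substantive gap.
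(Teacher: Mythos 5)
Your proposal is correct and follows the same route the paper intends: the corollary is just the case $n=1$ of Theorem \ref{B} (inequality (\ref{15})), and your algebraic simplifications (collapsing the sum to $(b-a)f(x)$, $3(b-x)+(x-a)=3b-2x-a$, $3(x-a)+(b-x)=b+2x-3a$, and absorbing $(n+2)^{\frac{1}{q}}=3^{\frac{1}{q}}$ into the brackets) all check out. You are also right on both editorial points: the citation of (\ref{11}) should read (\ref{15}), and after dividing by $b-a$ to put the left-hand side in averaged form the prefactor must be $\frac{1}{2(b-a)^{1+\frac{1}{q}}}$ rather than the printed $\frac{1}{2(b-a)^{\frac{1}{q}}}$ (compare the prefactor $\frac{1}{(b-a)^{2}}$ in Corollary \ref{AA} and $\frac{1}{(b-a)^{1+\frac{1}{q}}}$ in the $n=1$ corollary of Theorem \ref{yill}).
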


\section{APPLICATIONS TO SPECIAL MEANS}

We now consider the means for arbitrary real numbers $\alpha ,\beta $ $%
(\alpha \neq \beta ).$ We take

\begin{enumerate}
\item $Arithmetic$ $mean:$%
\begin{equation*}
A(\alpha ,\beta )=\frac{\alpha +\beta }{2},\text{ \ }\alpha ,\beta \in 
\mathbb{R}
^{+}.
\end{equation*}

\item $Logarithmic$ $mean$:%
\begin{equation*}
L(\alpha ,\beta )=\frac{\alpha -\beta }{\ln \left\vert \alpha \right\vert
-\ln \left\vert \beta \right\vert },\text{ \ \ }\left\vert \alpha
\right\vert \neq \left\vert \beta \right\vert ,\text{ }\alpha ,\beta \neq 0,%
\text{ }\alpha ,\beta \in 
\mathbb{R}
^{+}.
\end{equation*}
\end{enumerate}

Now using the results of Section 2, we give some applications for special
means of real numbers.

\begin{proposition}
Let $a,b\in 
\mathbb{R}
,$ $0<a<b$ and $n\in 
\mathbb{Z}
,$ $\left\vert n\right\vert \geq 1,$ then, the following inequality holds:%
\begin{eqnarray*}
\left\vert L_{n}^{n}(a,b)-x^{n}\right\vert &\leq &\frac{\left\vert
n\right\vert }{(b-a)^{2}}\left\{ \frac{\left[ (x-a)^{2}(3b-a-2x)+2(b-x)^{3}%
\right] .a^{n-1}}{6}\right. \\
&&\text{ \ \ \ \ \ \ \ \ \ \ }\left. +\frac{\left[
(b-x)^{2}(b-3a+2x)+2(x-a)^{3}\right] .b^{n-1}}{6}\right\} .
\end{eqnarray*}
\end{proposition}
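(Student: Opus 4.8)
The plan is to read the right-hand side as the output of Corollary \ref{AA} — the single-derivative ($n=1$) specialization of Theorem \ref{tin} — applied to the power function. Accordingly I would take $f(t)=t^{n}$ on $[a,b]$, where now $n$ denotes the integer exponent in the statement and \emph{not} the differentiation order in Corollary \ref{AA}; these two roles must be kept apart. Since $0<a<b$, every $t\in[a,b]$ is strictly positive, so $f$ is smooth on $[a,b]$ with $f'(t)=n\,t^{n-1}$ and $\left\vert f'(t)\right\vert=\left\vert n\right\vert t^{n-1}$; in particular $\left\vert f'(a)\right\vert=\left\vert n\right\vert a^{n-1}$ and $\left\vert f'(b)\right\vert=\left\vert n\right\vert b^{n-1}$, while $f(x)=x^{n}$.

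First I would verify the hypothesis of Corollary \ref{AA}, namely that $\left\vert f'\right\vert$ is convex on $[a,b]$. As $\left\vert f'\right\vert=\left\vert n\right\vert t^{n-1}$, this reduces to the convexity of $t\mapsto t^{n-1}$ on $(0,\infty)$, whose second derivative is $(n-1)(n-2)t^{n-3}$. This is nonnegative precisely when $n\leq 1$ or $n\geq 2$; because $n$ is an integer with $\left\vert n\right\vert\geq 1$, no admissible exponent lies in the open interval $(1,2)$, so convexity holds in every case, including the negative exponents.

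Next I would identify the average of $f$ with the stated mean. A direct evaluation gives $\frac{1}{b-a}\int_{a}^{b}t^{n}\,dt=\frac{b^{n+1}-a^{n+1}}{(n+1)(b-a)}=L_{n}^{n}(a,b)$ for $n\neq -1$, with the case $n=-1$ producing $\frac{\ln b-\ln a}{b-a}=L_{-1}^{-1}(a,b)$ in terms of the ordinary logarithmic mean. Combined with $f(x)=x^{n}$, this makes the left-hand side of Corollary \ref{AA} equal to $\left\vert L_{n}^{n}(a,b)-x^{n}\right\vert$, which is the quantity to be bounded.

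Substituting these data into Corollary \ref{AA} and factoring $\left\vert n\right\vert$ out of the two bracketed coefficients, the only computation left is the elementary regrouping of the kernel coefficients: one checks that $\frac{(x-a)^{2}[3(b-x)+(x-a)]}{6}+\frac{(b-x)^{3}}{3}=\frac{(x-a)^{2}(3b-a-2x)+2(b-x)^{3}}{6}$ and, symmetrically, $\frac{(b-x)^{2}[3(x-a)+(b-x)]}{6}+\frac{(x-a)^{3}}{3}=\frac{(b-x)^{2}(b-3a+2x)+2(x-a)^{3}}{6}$, which are exactly the factors multiplying $a^{n-1}$ and $b^{n-1}$ in the claim. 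The only step demanding genuine care is the convexity verification over the full range of admissible integer exponents — confirming that the gap $(1,2)$ traps no integer and that negative exponents still yield a convex $\left\vert f'\right\vert$ on $(0,\infty)$ — rather than the routine integral evaluation or the final algebraic bookkeeping.
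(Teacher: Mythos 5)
Your proposal is correct and follows exactly the paper's route: the paper's proof is the one-line remark that the result follows from Corollary \ref{AA} applied to $f(x)=x^{n}$, and you have simply supplied the details (the convexity check of $\left\vert f'\right\vert=\left\vert n\right\vert t^{n-1}$ over all admissible integer exponents, the identification of the mean, and the regrouping of the kernel coefficients) that the paper leaves implicit. Nothing further is needed.
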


\begin{proof}
The proof is obvious from Corollary \ref{AA} applied to the convex mapping $%
f(x)=x^{n},$ $x\in \lbrack a,b],$ $n\in 
\mathbb{Z}
$.
\end{proof}

\begin{proposition}
Let $a,b\in 
\mathbb{R}
,$ $0<a<b$ and $n\in 
\mathbb{Z}
,$ $\left\vert n\right\vert \geq 1,$ then, for all $q\geq 1,$ the following
inequality holds:%
\begin{eqnarray*}
\left\vert L_{n}^{n}(a,b)-x^{n}\right\vert &\leq &\frac{\left\vert
n\right\vert }{2(b-a)^{\frac{1}{q}}}\left\{ (x-a)^{2}\left[ \frac{%
(3b-2x-a)\left( a^{n-1}\right) ^{q}+2(x-a)\left( b^{n-1}\right) ^{q}}{3}%
\right] ^{\frac{1}{q}}\right. \\
&&\text{ \ \ \ \ \ \ \ \ \ \ }\left. +(b-x)^{2}\left[ \frac{2(b-x)\left(
a^{n-1}\right) ^{q}+(b+2x-3a)\left( b^{n-1}\right) ^{q}}{3}\right] ^{\frac{1%
}{q}}\right\} .
\end{eqnarray*}
\end{proposition}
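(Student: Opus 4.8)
The plan is to apply Corollary \ref{AB} to the power mapping $f(t)=t^{n}$ on $[a,b]$, since the right-hand side of the asserted inequality has exactly the power-mean shape produced by that corollary once the first-derivative data are inserted. First I would record the elementary data: $f^{\prime}(t)=nt^{n-1}$, so that $\left\vert f^{\prime}(a)\right\vert =\left\vert n\right\vert a^{n-1}$ and $\left\vert f^{\prime}(b)\right\vert =\left\vert n\right\vert b^{n-1}$, both of which are meaningful and positive because $0<a<b$ forces every power of $a$ and $b$ to be positive. Raising to the $q$-th power gives $\left\vert f^{\prime}(a)\right\vert ^{q}=\left\vert n\right\vert ^{q}\left( a^{n-1}\right) ^{q}$ and similarly at $b$.

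Next I would identify the averaged term. For $n\neq -1$ one has $\frac{1}{b-a}\int_{a}^{b}t^{n}\,dt=\frac{b^{n+1}-a^{n+1}}{(n+1)(b-a)}=L_{n}^{n}(a,b)$, which is precisely the $n$-th power of the generalized logarithmic mean; the case $n=-1$ is recovered as the limit, yielding $\frac{\ln b-\ln a}{b-a}=L_{-1}^{-1}(a,b)$. Hence the left-hand side of Corollary \ref{AB}, namely $\left\vert f(x)-\frac{1}{b-a}\int_{a}^{b}f(t)\,dt\right\vert$, becomes exactly $\left\vert L_{n}^{n}(a,b)-x^{n}\right\vert$, matching the claim.

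The single hypothesis that must be verified — and this is the only genuine point of the proof — is that $\left\vert f^{\prime}\right\vert ^{q}$ is convex on $[a,b]$. Here $\left\vert f^{\prime}(t)\right\vert ^{q}=\left\vert n\right\vert ^{q}t^{(n-1)q}$, and on $(0,\infty)$ the power $t^{\alpha}$ is convex precisely when $\alpha$ lies outside the open interval $(0,1)$. With $\alpha=(n-1)q$, $n$ an integer of modulus at least $1$, and $q\geq 1$, one checks the admissible cases: for $n\geq 2$ the exponent satisfies $\alpha \geq q\geq 1$; for $n=1$ the function is constant; and for $n\leq -1$ the exponent is negative. In every case $\alpha \notin (0,1)$, so $\left\vert f^{\prime}\right\vert ^{q}$ is convex and Corollary \ref{AB} applies. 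I expect this bookkeeping across the sign of $n$ to be the main (and only) obstacle; everything else is substitution.

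Finally I would substitute and factor. Pulling $\left\vert n\right\vert ^{q}$ out of each bracketed sum in Corollary \ref{AB} lets the common factor $\left\vert n\right\vert$ pass through the exponent $\tfrac{1}{q}$, so that $\left\vert n\right\vert$ emerges from both the $(x-a)^{2}[\,\cdot\,]^{1/q}$ and $(b-x)^{2}[\,\cdot\,]^{1/q}$ terms and combines with the prefactor $\tfrac{1}{2(b-a)^{1/q}}$ to give the stated constant $\tfrac{\left\vert n\right\vert}{2(b-a)^{1/q}}$. Replacing $\left\vert f^{\prime}(a)\right\vert ^{q},\left\vert f^{\prime}(b)\right\vert ^{q}$ by $\left( a^{n-1}\right) ^{q},\left( b^{n-1}\right) ^{q}$ inside the two brackets then reproduces the inequality verbatim, completing the proof.
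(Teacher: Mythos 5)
Your proof is correct and follows exactly the paper's route: the paper's own proof is the one-line remark that the result is ``obvious from Corollary \ref{AB} applied to $f(x)=x^{n}$,'' and you carry out precisely that application, supplying the details (identification of $\frac{1}{b-a}\int_a^b t^n\,dt$ with $L_n^n(a,b)$, the case check that $|f'(t)|^q=|n|^q t^{(n-1)q}$ is convex for all integers $|n|\ge 1$ and $q\ge 1$, and the extraction of $|n|$ through the $1/q$ power) that the paper omits.
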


\begin{proof}
The proof is obvious from Corollary \ref{AB} applied to the convex mapping $%
f(x)=x^{n},$ $x\in \lbrack a,b],$ $n\in 
\mathbb{Z}
$.
\end{proof}

\end{document}